\thanks{\copyright 2020 Texas State University.}
\begin{document}
\title[\hfilneg EJDE-2020/111\hfil Fractional differential equations]
{Convergence of solutions of fractional differential equations
to power-type functions}

\author[M. D. Kassim, N. E. Tatar \hfil EJDE-2020/111\hfilneg]
{Mohammed Dahan Kassim, Nasser Eddine Tatar}

\address{Mohammed Dahan Kassim \newline
Department of Basic Engineering Sciences,
College of Engineering,
Imam Abdulrahman Bin Faisal University,
P.O. Box 1982, Dammam 31441, Saudi Arabia}
\email{mdkassim@iau.edu.sa}

\address{Nasser Eddine Tatar\newline
King Fahd University of Petroleum and Minerals,
Department of Mathematics and Statistics,
Dhahran, 31261, Saudi Arabia}
\email{tatarn@kfupm.edu.sa}

\thanks{Submitted September 15, 2020. Published November 4, 2020.}
\subjclass[2010]{34E10, 26A33, 34A08}
\keywords{Asymptotic behavior; boundedness; fractional differential equation;
\hfill\break\indent Caputo fractional derivative; Riemann-Liouville
fractional derivative}

\begin{abstract}
 In this article we study the asymptotic behavior of solutions of some fractional
 differential equations.  We prove convergence to power type functions under some
 assumptions on the nonlinearities. Our results extend and generalize some existing
 well-known results on solutions of ordinary differential equations.
 Appropriate estimations and lemmas such as a fractional version of L'Hopital's rule
 are used.
\end{abstract}

\maketitle
\numberwithin{equation}{section}
\newtheorem{theorem}{Theorem}[section]
\newtheorem{lemma}[theorem]{Lemma}
\newtheorem{definition}[theorem]{Definition}
\newtheorem{remark}[theorem]{Remark}
\newtheorem{example}[theorem]{Example}
\allowdisplaybreaks

\section{Introduction}

We consider the initial value problems
\begin{equation}
\begin{gathered}
(^C \mathfrak{D}_0^{\alpha }x) '(\tau)
 =f(\tau ,x(\tau) ,^C \mathfrak{D}_0^{\beta}x(\tau)),\quad
 0<\beta <\alpha <1,\; \tau >0 \\
{}^C \mathfrak{D}_0^{\alpha }x(\tau) \big|_{\tau=0} =b_2,\quad
=x(\tau) |_{\tau =0}=b_1,\quad b_1, \quad b_2\in \mathbb{R},
\end{gathered}  \label{cd1}
\end{equation}
and
\begin{equation}
\begin{gathered}
^C \mathfrak{D}_0^{\alpha }x(\tau)
=f(\tau ,x(\tau) ,{}^C \mathfrak{D}_0^{\beta }x(\tau)) ,
\quad 0\leq \beta <\alpha <1,\; \tau >0 \\
x(\tau) |_{\tau =0}=b,
\end{gathered}   \label{sc1}
\end{equation}
where $^C \mathfrak{D}_0^{\alpha }$ is the Caputo fractional
derivative. The definition of the Caputo fractional derivative is given in
the next section. We prove that the solutions of \eqref{cd1} approach power
type functions and the solutions of \eqref{sc1} are bounded. To this end,
the fractional differential problems \eqref{cd1} and \eqref{sc1} are first
transformed into equivalent integral equations in appropriate underlying
spaces. Various appropriate estimates, comparison theorems and lemmas are
used. Moreover, we prove a Caputo fractional version of L'Hopital's rule.
Our arguments here are quite different from those used so far in the
literature.

The behavior of solutions of various classes of ODEs (ordinary differential
equations) has been discussed in fairly a large number of papers in the
literature. For example the equation
\begin{equation}
x''(\tau) +f(\tau ,x(\tau)) =0,  \label{ra3}
\end{equation}
has been studied in
\cite{Kusano+Trench-1,Kusano+Trench-2,Cohen,Tong,Waltman,Con,Trench} and other
papers. The authors proved that, under various conditions, all solutions of
\eqref{ra3} are asymptotic to $c\tau +b$ as $\tau \to \infty $, $c,b\in\mathbb{R}$.
For the equation
\begin{equation}
x''(\tau) +f(\tau ,x(\tau),x'(\tau)) =0,  \label{ra4}
\end{equation}
see, for instance
\cite{Constantin,Dannan, lip,Med(2016),Med(2008),mus,Rogovchenko,Rogovchenko+Rogovchenko}.
It is proved that all solutions of \eqref{ra4} can be expressed
asymptotically as $c\tau +b$ as $\tau \to \infty $, $c, b\in\mathbb{R}$.

Medve\v{d} and Pek\'arkov\'a \cite{Med(2008)}, studied the
one-dimensional p-Laplacian equation
\begin{equation}
(| x'|^{p-1}x') '=f(\tau ,x,x') ,\quad p>1.  \label{1.6}
\end{equation}
They demonstrated that any solution of \eqref{1.6} behaves asymptotically as
$b+c\tau $ as $\tau \to \infty $ for some real numbers $b$, $c$.

In  \cite{Med(2016)}, the initial value problem
\begin{equation}
\begin{gathered}
(\Phi _{p}(x') \Psi (\tau))
'+f(\tau ,x,x') =0,\quad 1<p<2, \\
x(\tau _0) =x_0,\quad x'(\tau _0)
=x_1,\quad \tau _0\geq 1,
\end{gathered}  \label{1.7}
\end{equation}
\newline
was studied, where $\Phi _{p}(u) =| u|^{p-2}u$
and $\Psi (\tau) $ is a continuous positive function.
Sufficient conditions under which all solutions of \eqref{1.7} obey the
asymptotic expansion $x(\tau) =b+c\tau $ are established.

In contrast, the fractional case of equations \eqref{ra3} and \eqref{ra4}
have been studied by comparatively a only few researchers; see, for instance
\cite{B1,B2,B3,B4,B5,Bre,K2,K3,K4,K5, K1,Medved-12,Medved-13,Med(2015)}.
In 2009, B\u{a}leanu and Mustafa \cite{B1} studied the nonlinear fractional
differential equation
\begin{equation}
^C \mathfrak{D}_0^{\alpha }x(\tau) =f(\tau ,x(
\tau)) ,\quad 0<\alpha <1,\quad \tau >0.  \label{1.1}
\end{equation}
They showed that the solutions of \eqref{1.1} are asymptotic to
$o(\tau^{c\alpha }) $ as $\tau \to \infty $, for some $c$.

In 2012, Medve\v{d} \cite{Medved-12} studied the problem
\begin{equation}
\begin{gathered}
^C \mathfrak{D}_{a}^{\alpha +1}x(\tau)
=f(\tau,x(\tau)) ,\quad 0<\alpha <1,\quad \tau \geq a>1 \\
x(a) =c_1,\quad x'(a) =c_2.
\end{gathered}  \label{1.2}
\end{equation}
He demonstrated that any solution of \eqref{1.2} has the asymptotic property
$x(\tau) =b+c\tau $ as $\tau \to \infty $, for some $c,b\in\mathbb{R}$.

Also, in 2013, Medve\v{d} \cite{Medved-13} discussed the equation
\begin{equation}
^C \mathfrak{D}_{a}^{\alpha +1}x(\tau)
=f(\tau,x(\tau) ,x'(\tau)),\quad
0<\alpha <1,\quad \tau \geq a>1  \label{1.3}
\end{equation}
and proved that every solution of \eqref{1.3} can be expressed
asymptotically as $b+c\tau $ as $\tau \to \infty $, for some
$c,b\in\mathbb{R}$.

Brestovansk\'a and Medve\v{d} \cite{Bre} studied the problem
\begin{equation}
\begin{gathered}
x''(\tau) +f(\tau ,x(\tau),x'(\tau)) +\sum_{i=1}^{m}r_i(
\tau) \int_0^{\tau }(\tau -s)^{\alpha_i-1}f_i(s,x(s) ,x'(s)) ds=0 \\
x(1) =b_1,\quad x'(1) =b_2,\quad 0<\alpha _i<1,\; i=1,2,\dots ,m.
\end{gathered}  \label{1.4}
\end{equation}
They showed that any solution  enjoys the asymptotic expansion
$x(\tau) =b+c\tau $ as $\tau \to \infty $, for some $c,b\in\mathbb{R}$.

In 2015, Medve\v{d} and Posp\'{\i}\v{s}il \cite{Med(2015)} considered the
equation
\begin{equation}
^C \mathfrak{D}_{a}^{\alpha }x(\tau) =f(\tau ,x(
\tau) ,{}^C \mathfrak{D}_0^{\beta }x(\tau)) ,
\quad \tau >a.  \label{1.5}
\end{equation}
They proved that any solution $x(\tau) $ with  $
0<\beta <\alpha <1$, has the asymptotic property $x(\tau)
=c\tau^{\beta }+o(\tau^{\beta }) $ as $\tau \to \infty$, for some
$c\in \mathbb{R}$.
Also they proved that any solution $x(\tau) $ of \eqref{1.5}, $0<\beta <1<\alpha <2$,
has the asymptotic property $x(\tau)=c\tau +o(\tau) $ as $\tau \to \infty $, for some
$c\in\mathbb{R}$. Moreover, they proved that there exists a constant
$c\in\mathbb{R}$ such that any global solution $x(\tau) $ of the initial value
problem
\begin{equation}
\begin{gathered}
^C \mathfrak{D}_{a}^{\alpha }x(\tau)
=f\big(\tau ,x(\tau) ,x'(\tau) ,\dots ,x^{(n-1)
}(\tau) ,{}^C \mathfrak{D}_0^{\beta _1}x(\tau
) ,\dots ,{}^C \mathfrak{D}_0^{\beta _{m}}x(\tau)
\big) ,\\
  \tau >a .
x^{(n-1) }(a) =c_i,\quad i=0,1,\dots ,n-1,
\end{gathered} \label{1.8}
\end{equation}
has the asymptotic property $x(\tau) =c\tau^{k}+o(\tau ^{k}) $ as $\tau \to \infty $, where
$k\in \max \{n-1,\beta _{m}\} $, $0<\beta _1<\dots .<\beta _{m}<\alpha <n$, and
$n$, $m\in\mathbb{N}$.

In Sections \ref{sec2} and \ref{sec3}, we prepare some material which will
be needed later in our proofs.
Sections \ref{sec4}, \ref{sec5} and \ref{sec6}
are devoted to the main results on the asymptotic behavior results and
boundedness of solutions for non-fractional and fractional source terms,
respectively.

\section{Preliminaries} \label{sec2}

In this section, we introduce some basic definitions, notation, properties
and lemmas to be used in our results. We refer the reader to \
cite{Kilbas,Podl-1999,Samko} for more details.

\begin{definition}[\cite{Kilbas}] \label{def:2.1.4} \rm
We introduce the space
\[
C_{\eta }[ a,b] =\{ h:(a,b]\to \mathbb{R}:
\quad h(\tau) (\tau -a)^{\eta }\in C[ a,b] \} ,\quad 1>\eta \geq 0.
\]
\end{definition}

\begin{definition}[\cite{Kilbas}] \label{def:2.2.1}\rm
The left-sided Riemann-Liouville fractional integral of order $\alpha >0$ is
defined by
\[
\mathfrak{I}_{a}^{\alpha }f(\tau) :=\frac{1}{\Gamma (
\alpha) }\int_{a}^{\tau }\frac{f(s) }{(
\tau -s)^{1-\alpha }}ds,\quad \tau >a,
\]
provided that the right hand side exists.
\end{definition}

\begin{definition}[\cite{Kilbas}] \label{def:2.2.3}\rm
The left-sided Riemann-Liouville fractional derivative of order
$\alpha \geq 0$, $n-1\leq \alpha <n$, $n=-[ -\alpha ] $, is defined by
\begin{align*}
\mathfrak{D}_{a}^{\alpha }f(\tau)
&= D^{n}\mathfrak{I}_{a}^{n-\alpha }f(\tau)
 =(\frac{d}{d\tau })^{n}\mathfrak{I}_{a}^{n-\alpha }f(\tau) \\
&=\frac{1}{\Gamma (n-\alpha)} \big(\frac{d}{d\tau }\big)^{n}
 \int_{a}^{\tau }\frac{f(s)}{(\tau -s)^{\alpha -n+1}}ds,\quad \tau>a.
\end{align*}
In particular, when $\alpha =n$ we have
$\mathfrak{D}_{a}^{\alpha}f=D^{n}f$,
and when $\alpha =0$, $\mathfrak{D}_{a}^{0}f=f$.
\end{definition}

\begin{definition}[\cite{Kilbas}] \label{def:2.2.4}\rm
The left-sided Caputo fractional derivative of order $\alpha \geq 0$, $n-1\leq \alpha <n$,
$n=-[ -\alpha ] $, is defined by
\[
^C\mathfrak{D}_{a}^{\alpha }f(\tau) =\mathfrak{I}
_{a}^{n-\alpha }f^{(n) }(\tau),\;\tau >a.
\]
\end{definition}

The fractional integral and fractional derivative of power functions have
the same effect as the integer-order integral and derivative. Namely,

\begin{lemma}[\cite{Kilbas}]\label{pro:2.2.1}
If $\beta >0$ and $\alpha \geq 0$,  then
\begin{gather*}
\mathfrak{I}_{a}^{\alpha }(\tau -a)^{\beta -1}=\frac{\Gamma
(\beta) }{\Gamma (\beta +\alpha) }(\tau
-a)^{\alpha +\beta +1},\quad \alpha >0,\; \tau >a, \\
^C\mathfrak{D}_{a}^{\alpha }(\tau -a)^{\beta -1}=\frac{\Gamma
(\beta) }{\Gamma (\beta -\alpha) }(\tau
-a)^{\beta -\alpha -1},\quad \alpha \geq 0,\; \tau >a.
\end{gather*}
 If $\beta =1$, then
$(^C\mathfrak{D}_{a}^{\alpha }1) (\tau) =0$, $\tau >a$.
\end{lemma}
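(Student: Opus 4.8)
The plan is to prove the fractional-integral identity directly from the definition and the Euler Beta integral, then obtain the Caputo-derivative identity as a consequence of it together with $^C\mathfrak{D}_{a}^{\alpha}f=\mathfrak{I}_{a}^{n-\alpha}f^{(n)}$, and finally dispose of the case $\beta=1$ by a one-line argument.

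First I would substitute $f(\tau)=(\tau-a)^{\beta-1}$ into Definition \ref{def:2.2.1}, getting
\[
\mathfrak{I}_{a}^{\alpha}(\tau-a)^{\beta-1}
=\frac{1}{\Gamma(\alpha)}\int_{a}^{\tau}(s-a)^{\beta-1}(\tau-s)^{\alpha-1}\,ds ,
\]
and then change variables by $s=a+(\tau-a)t$, $ds=(\tau-a)\,dt$, which carries $[a,\tau]$ onto $[0,1]$ and pulls out a factor $(\tau-a)^{\alpha+\beta-1}$, leaving
\[
\frac{(\tau-a)^{\alpha+\beta-1}}{\Gamma(\alpha)}\int_{0}^{1} t^{\beta-1}(1-t)^{\alpha-1}\,dt .
\]
The remaining integral is the Beta function $B(\beta,\alpha)$, whose convergence at both endpoints is exactly the content of the hypotheses $\beta>0$ and $\alpha>0$; inserting $B(\beta,\alpha)=\Gamma(\beta)\Gamma(\alpha)/\Gamma(\beta+\alpha)$ and cancelling $\Gamma(\alpha)$ yields the first formula.

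For the Caputo derivative with $n-1\le\alpha<n$, $n=-[-\alpha]$, I would use Definition \ref{def:2.2.4} to write $^C\mathfrak{D}_{a}^{\alpha}(\tau-a)^{\beta-1}=\mathfrak{I}_{a}^{n-\alpha}\big(D^{n}(\tau-a)^{\beta-1}\big)$. When $\beta>n$ ordinary calculus gives $D^{n}(\tau-a)^{\beta-1}=\frac{\Gamma(\beta)}{\Gamma(\beta-n)}(\tau-a)^{\beta-n-1}$, and then applying the integral formula just proved, with $\beta$ replaced by $\beta-n>0$ and $\alpha$ replaced by $n-\alpha>0$, produces $\frac{\Gamma(\beta)}{\Gamma(\beta-n)}\cdot\frac{\Gamma(\beta-n)}{\Gamma(\beta-\alpha)}(\tau-a)^{\beta-\alpha-1}$; the spurious factor $\Gamma(\beta-n)$ cancels and the claimed identity drops out. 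For $\beta=1$ the function $(\tau-a)^{\beta-1}\equiv1$, so $D^{n}1=0$ for every $n\ge1$ and hence $^C\mathfrak{D}_{a}^{\alpha}1=\mathfrak{I}_{a}^{n-\alpha}0=0$ for $\tau>a$, which is the last assertion.

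The step I expect to require the most care is the passage to general $\beta>0$ in the derivative formula: the clean computation above needs $\beta>n$ so that the intermediate power $(\tau-a)^{\beta-n-1}$ is locally integrable at $\tau=a$, and when $\beta-1$ lands on one of $0,1,\dots,n-1$ the $n$-th derivative vanishes outright, so the formula must be read with the convention $1/\Gamma(k)=0$ for non-positive integers $k$. These borderline cases are handled either by that convention together with analytic continuation in $\beta$ of both sides of the identity, or by the direct observation (as for $\beta=1$) that the Caputo derivative of a polynomial of degree $<n$ is zero; everything else is the elementary change of variables and the Beta--Gamma relation used above.
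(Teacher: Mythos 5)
The paper offers no proof of this lemma at all: it is quoted verbatim from the reference [Kilbas], so there is nothing internal to compare your argument against. Your derivation is the standard (and correct) one: the change of variables $s=a+(\tau-a)t$ reduces $\mathfrak{I}_{a}^{\alpha}(\tau-a)^{\beta-1}$ to a Beta integral, and the composition $^C\mathfrak{D}_{a}^{\alpha}=\mathfrak{I}_{a}^{n-\alpha}D^{n}$ then gives the derivative formula. Two remarks are worth making. First, your computation correctly produces the exponent $\alpha+\beta-1$, which exposes a typo in the statement as printed ($\alpha+\beta+1$); this is worth flagging since the lemma is later applied with that (correct) exponent, e.g.\ in the proof of Lemma \ref{lem:4.1.1}. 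Second, your caution about the range of $\beta$ in the Caputo formula is genuinely needed, not just pedantry: for $0<\beta<1$, $\beta\neq 1$, the function $(\tau-a)^{\beta-1}$ is not absolutely continuous at $a$ and $D^{n}(\tau-a)^{\beta-1}=c\,(\tau-a)^{\beta-n-1}$ fails to be locally integrable there, so Definition \ref{def:2.2.4} does not directly apply; moreover at $\beta=k+1$ with $0\le k\le n-1$ the displayed formula would give a nonzero answer while the Caputo derivative of a polynomial of degree less than $n$ vanishes (which is exactly why the case $\beta=1$ is stated separately). The formula as printed for all $\beta>0$ is really the Riemann--Liouville one; in [Kilbas] the Caputo version is stated for $\beta>n$ together with the separate vanishing statement for integer powers below $n$, which is precisely the reading you adopt. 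Since the paper only ever invokes this lemma for the fractional integral of a power and for $^C\mathfrak{D}_{a}^{\alpha}$ of a constant, your restricted (and correct) version covers every actual use.
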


The Riemann-Liouville fractional integral (Definition \ref{def:2.2.1})
satisfies the following semigroup property.

\begin{lemma} \label[\cite{Kilbas}] \label{lem:2.2.2}
Let $0\leq \eta <1$, $\alpha >0$ and $\beta >0$.  If $h\in C_{\eta }[ a,b]$,
then
\[
\mathfrak{I}_{a}^{\beta }\mathfrak{I}_{a}^{\alpha }h(\tau) =
\mathfrak{I}_{a}^{\beta +\alpha }h(\tau) ,\quad \tau >a.
\]
\end{lemma}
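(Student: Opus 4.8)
The plan is to reduce the identity to the classical Beta-function computation by writing out both Riemann--Liouville integrals explicitly and interchanging the order of integration. Starting from the left-hand side,
\[
\mathfrak{I}_a^\beta \mathfrak{I}_a^\alpha h(\tau)
= \frac{1}{\Gamma(\beta)\Gamma(\alpha)} \int_a^\tau (\tau-s)^{\beta-1}
\Big( \int_a^s (s-u)^{\alpha-1} h(u)\, du \Big) ds ,
\]
I would first justify swapping the two integrals, then collapse the resulting inner $s$-integral.

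Second, for the interchange I would invoke Tonelli's theorem, and this is where the hypothesis $h \in C_\eta[a,b]$ enters: since $(\cdot - a)^\eta h(\cdot) \in C[a,b]$ is bounded, say by $M$, we have $|h(u)| \le M (u-a)^{-\eta}$ on $(a,b]$. Because $\alpha > 0$, $\beta > 0$ and $0 \le \eta < 1$, the triple integral
\[
\int_a^\tau \int_u^\tau (\tau-s)^{\beta-1} (s-u)^{\alpha-1} (u-a)^{-\eta}\, ds\, du
\]
has only integrable power singularities (at $s = u$, at $s = \tau$, and at $u = a$), hence is finite; Tonelli then permits the interchange, yielding
\[
\mathfrak{I}_a^\beta \mathfrak{I}_a^\alpha h(\tau)
= \frac{1}{\Gamma(\beta)\Gamma(\alpha)} \int_a^\tau h(u)
\Big( \int_u^\tau (\tau-s)^{\beta-1} (s-u)^{\alpha-1}\, ds \Big) du .
\]

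Third, in the inner integral I would substitute $s = u + (\tau - u)t$, $t \in [0,1]$, which turns it into $(\tau - u)^{\alpha+\beta-1} \int_0^1 t^{\alpha-1}(1-t)^{\beta-1}\, dt = (\tau-u)^{\alpha+\beta-1} B(\alpha,\beta)$, and then use $B(\alpha,\beta) = \Gamma(\alpha)\Gamma(\beta)/\Gamma(\alpha+\beta)$. Substituting back leaves exactly
\[
\mathfrak{I}_a^\beta \mathfrak{I}_a^\alpha h(\tau)
= \frac{1}{\Gamma(\alpha+\beta)} \int_a^\tau (\tau-u)^{\alpha+\beta-1} h(u)\, du
= \mathfrak{I}_a^{\alpha+\beta} h(\tau)
\]
for every $\tau > a$, which is the claim.

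I expect the only genuine obstacle to be the bookkeeping in the Tonelli step --- confirming that the combined singularities are integrable and that the membership $h \in C_\eta[a,b]$, rather than mere continuity on $[a,b]$, is precisely what is needed to control $h$ near the endpoint $a$; the remaining computation is the standard Beta-integral manipulation.
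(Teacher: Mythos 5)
Your argument is correct and is exactly the classical proof of the semigroup property (Dirichlet's interchange formula plus the Beta integral); the paper itself offers no proof, quoting the result from the cited monograph of Kilbas, Srivastava and Trujillo, where essentially this same computation appears. The only point worth tightening is terminological: Tonelli applies to the nonnegative integrand $(\tau-s)^{\beta-1}(s-u)^{\alpha-1}(u-a)^{-\eta}$ to establish absolute integrability (your bound $|h(u)|\le M(u-a)^{-\eta}$, valid because $(\cdot-a)^{\eta}h\in C[a,b]$ is bounded, is precisely what is needed there), after which it is Fubini that licenses the interchange for the signed integrand containing $h$ itself; your singularity bookkeeping at $s=u$, $s=\tau$ and $u=a$ is right, since the inner $s$-integral evaluates to $B(\alpha,\beta)(\tau-u)^{\alpha+\beta-1}$ and the remaining $u$-integral of $(u-a)^{-\eta}(\tau-u)^{\alpha+\beta-1}$ converges because $\eta<1$ and $\alpha+\beta>0$.
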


The following result provides another composition of the fractional
integration operator $\mathfrak{I}_{a}^{\alpha }$ with the fractional
differentiation operator $\mathfrak{D}_{a}^{\alpha }$.

\begin{lemma}[\cite{Kilbas}] \label{lem:2.2.5}
Let $\alpha >0$, $0\leq \eta <1$, $n=-[ -\alpha ] $.
If $h\in C_{\eta }[ a,b] $  and
$\mathfrak{I}_{a}^{n-\alpha }h\in C_{\eta}^{n}[ a,b] $, then
\[
\mathfrak{I}_{a}^{\alpha }\mathfrak{D}_{a}^{\alpha }h(\tau)
=h(\tau) -\sum_{i=1}^{n}\frac{(\mathfrak{D}^{n-i}
\mathfrak{I}_{a}^{n-\alpha }h) (a) }{\Gamma (\alpha
-i+1) }(\tau -a)^{\alpha -i},\quad \tau >a.
\]
\end{lemma}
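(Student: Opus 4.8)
The plan is to reduce the identity to the classical relation $D^{n}\mathfrak{I}_{a}^{n}=\mathrm{id}$ together with Taylor's formula with integral remainder, while using Lemma \ref{pro:2.2.1} (fractional integrals of powers) and Lemma \ref{lem:2.2.2} (the semigroup property) merely to keep track of the fractional orders. Throughout, $\mathfrak{D}_{a}^{\alpha}$ is the Riemann--Liouville derivative of Definition \ref{def:2.2.3}.

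First I would set $g:=\mathfrak{I}_{a}^{n-\alpha}h$. By hypothesis $g\in C_{\eta}^{n}[a,b]$, so $g',\dots,g^{(n-1)}$ extend continuously to $[a,b]$ and $g^{(n)}\in C_{\eta}[a,b]$; moreover Definition \ref{def:2.2.3} gives $\mathfrak{D}_{a}^{\alpha}h=D^{n}\mathfrak{I}_{a}^{n-\alpha}h=g^{(n)}$, whence $\mathfrak{I}_{a}^{\alpha}\mathfrak{D}_{a}^{\alpha}h=\mathfrak{I}_{a}^{\alpha}g^{(n)}$. Since $g^{(n)}$ is integrable on $[a,b]$ (as $\eta<1$), Taylor's formula with integral remainder yields
\[
g(\tau)=\sum_{k=0}^{n-1}\frac{g^{(k)}(a)}{k!}(\tau-a)^{k}+\mathfrak{I}_{a}^{n}g^{(n)}(\tau),\qquad a<\tau\le b.
\]

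Next I would apply $\mathfrak{I}_{a}^{\alpha}$ to this identity. On the left, Lemma \ref{lem:2.2.2} (legitimate since $h\in C_{\eta}[a,b]$) gives $\mathfrak{I}_{a}^{\alpha}g=\mathfrak{I}_{a}^{\alpha}\mathfrak{I}_{a}^{n-\alpha}h=\mathfrak{I}_{a}^{n}h$. On the right, Lemma \ref{pro:2.2.1} evaluates $\mathfrak{I}_{a}^{\alpha}(\tau-a)^{k}=\frac{k!}{\Gamma(k+\alpha+1)}(\tau-a)^{k+\alpha}$, while a second use of Lemma \ref{lem:2.2.2} (legitimate since $g^{(n)}\in C_{\eta}[a,b]$) gives $\mathfrak{I}_{a}^{\alpha}\mathfrak{I}_{a}^{n}g^{(n)}=\mathfrak{I}_{a}^{\,n+\alpha}g^{(n)}=\mathfrak{I}_{a}^{n}\bigl(\mathfrak{I}_{a}^{\alpha}g^{(n)}\bigr)$. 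This leaves
\[
\mathfrak{I}_{a}^{n}h(\tau)=\sum_{k=0}^{n-1}\frac{g^{(k)}(a)}{\Gamma(k+\alpha+1)}(\tau-a)^{k+\alpha}+\mathfrak{I}_{a}^{n}\bigl(\mathfrak{I}_{a}^{\alpha}g^{(n)}\bigr)(\tau).
\]
Applying $D^{n}$ to both sides and using $D^{n}\mathfrak{I}_{a}^{n}=\mathrm{id}$, the rule $D^{n}(\tau-a)^{k+\alpha}=\frac{\Gamma(k+\alpha+1)}{\Gamma(k+\alpha-n+1)}(\tau-a)^{k+\alpha-n}$, and $\mathfrak{I}_{a}^{\alpha}g^{(n)}=\mathfrak{I}_{a}^{\alpha}\mathfrak{D}_{a}^{\alpha}h$, I would obtain
\[
\mathfrak{I}_{a}^{\alpha}\mathfrak{D}_{a}^{\alpha}h(\tau)=h(\tau)-\sum_{k=0}^{n-1}\frac{g^{(k)}(a)}{\Gamma(k+\alpha-n+1)}(\tau-a)^{k+\alpha-n}.
\]
Relabeling $i=n-k$, so that $g^{(k)}(a)=\bigl(\mathfrak{D}^{n-i}\mathfrak{I}_{a}^{n-\alpha}h\bigr)(a)$ and $k+\alpha-n=\alpha-i$, turns this into precisely the asserted formula.

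The only genuine point needing care lies in the last step: checking that each invocation of Lemma \ref{lem:2.2.2} is licit, i.e.\ that $h$ and $g^{(n)}$ really belong to $C_{\eta}[a,b]$ (the first by assumption, the second because $\mathfrak{I}_{a}^{n-\alpha}h\in C_{\eta}^{n}[a,b]$), and that $\mathfrak{I}_{a}^{\alpha}$ and $D^{n}$ may be passed termwise through the finite Taylor sum together with its remainder — which follows from linearity and from the continuity/integrability afforded by the hypotheses. A separate and trivial check is the degenerate case in which $\alpha$ is a positive integer, where $\mathfrak{D}_{a}^{\alpha}=D^{n}$ and the statement collapses to the classical Taylor identity $\mathfrak{I}_{a}^{n}D^{n}h=h-\sum_{k=0}^{n-1}\frac{h^{(k)}(a)}{k!}(\tau-a)^{k}$.
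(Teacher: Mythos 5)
This lemma is imported verbatim from the Kilbas--Srivastava--Trujillo monograph; the paper itself gives no proof, so there is nothing internal to compare against. Your derivation, however, is correct and self-contained, and it is essentially the classical route: set $g=\mathfrak{I}_{a}^{n-\alpha}h$ so that $\mathfrak{D}_{a}^{\alpha}h=g^{(n)}$, expand $g$ by Taylor's formula with integral remainder, hit the identity with $\mathfrak{I}_{a}^{\alpha}$, and peel off $D^{n}$ at the end. The bookkeeping checks out: the semigroup property is invoked only on $h$ and on $g^{(n)}$, both of which lie in $C_{\eta}[a,b]$ by hypothesis; the reindexing $i=n-k$ reproduces the stated sum exactly (with $\Gamma(\alpha-i+1)$ well defined since $\alpha-i+1>0$ for $1\le i\le n=-[-\alpha]$); and the integer case $\alpha=n$ degenerates to Taylor's theorem as you observe. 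Incidentally, you have silently corrected a typo in the paper's Lemma \ref{pro:2.2.1}, whose exponent should read $(\tau-a)^{\alpha+\beta-1}$ rather than $(\tau-a)^{\alpha+\beta+1}$; your use of $\mathfrak{I}_{a}^{\alpha}(\tau-a)^{k}=\frac{k!}{\Gamma(k+\alpha+1)}(\tau-a)^{k+\alpha}$ is the right formula.

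Two small points deserve to be made explicit rather than waved at. First, the space $C_{\eta}^{n}[a,b]$ is never defined in the paper; your argument needs the standard meaning $g\in C^{n-1}[a,b]$ with $g^{(n)}\in C_{\eta}[a,b]$, since otherwise the values $g^{(k)}(a)$, $k\le n-1$, and the integral-remainder form of Taylor's theorem (which requires $g^{(n-1)}$ to be the absolutely continuous antiderivative of the merely locally integrable $g^{(n)}$) are not available. Second, the final step $D^{n}\mathfrak{I}_{a}^{n}u=u$ is applied to $u=h$ and to $u=\mathfrak{I}_{a}^{\alpha}g^{(n)}$; it holds pointwise on $(a,b]$ because both functions are continuous there and integrable near $a$, which is exactly where the restriction $\eta<1$ enters. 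With those two sentences added, the proof is complete.
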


\begin{lemma}[\cite{Kilbas}] \label{lem:2.4.1}
Let $\alpha >0$, $n=- [ -\alpha ] $. If $h\in C^{n}[ a,b] $ or $
h\in AC^{n}[ a,b]$,  then
\[
\mathfrak{I}_{a}^{\alpha } {}^C \mathfrak{D}_{a}^{\alpha }h(
\tau) =h(\tau) -\sum_{k=0}^{n-1}\frac{h^{(k) }(a) }{k!}(\tau -a)^{k},\quad \tau >a.
\]
\end{lemma}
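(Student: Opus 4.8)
The plan is to reduce the composition $\mathfrak{I}_a^\alpha\,{}^C\mathfrak{D}_a^\alpha$ to an ordinary $n$-fold iterated integral and then invoke Taylor's formula with integral remainder. First I would unwind Definition \ref{def:2.2.4}: with $n=-[-\alpha]$ one has ${}^C\mathfrak{D}_a^\alpha h(\tau)=\mathfrak{I}_a^{n-\alpha}h^{(n)}(\tau)$, which makes sense because $h\in C^n[a,b]$ (or $AC^n[a,b]$) guarantees $h^{(n)}$ is integrable. Applying $\mathfrak{I}_a^\alpha$ to both sides reduces the claim to simplifying $\mathfrak{I}_a^\alpha\mathfrak{I}_a^{n-\alpha}h^{(n)}(\tau)$.

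Next I would use the semigroup property of Lemma \ref{lem:2.2.2} with exponents $\alpha$ and $n-\alpha$ to obtain $\mathfrak{I}_a^\alpha\mathfrak{I}_a^{n-\alpha}h^{(n)}(\tau)=\mathfrak{I}_a^{n}h^{(n)}(\tau)$. Here the hypotheses must be checked. When $h\in C^n[a,b]$, the derivative $h^{(n)}$ lies in $C[a,b]=C_0[a,b]\subset C_\eta[a,b]$, so Lemma \ref{lem:2.2.2} applies directly with $\eta=0$ (and when $\alpha$ is itself an integer, $n-\alpha=0$, $\mathfrak{I}_a^{n-\alpha}$ is the identity, and this step is vacuous). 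When $h\in AC^n[a,b]$, the function $h^{(n)}$ is merely in $L^1[a,b]$, and in that case I would instead invoke (or reprove by Fubini's theorem together with the Beta-integral identity $\int_s^\tau(\tau-r)^{\alpha-1}(r-s)^{n-\alpha-1}\,dr=B(\alpha,n-\alpha)(\tau-s)^{n-1}$) the classical fact that $\mathfrak{I}_a^{\mu}\mathfrak{I}_a^{\nu}g=\mathfrak{I}_a^{\mu+\nu}g$ holds for every $g\in L^1[a,b]$.

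Finally, $\mathfrak{I}_a^n$ with $n\in\mathbb{N}$ is, by an elementary induction on the Riemann--Liouville definition, exactly the $n$-fold ordinary antiderivative vanishing with its first $n-1$ derivatives at $a$; equivalently it is the Cauchy iterated-integral formula $\mathfrak{I}_a^n g(\tau)=\frac{1}{(n-1)!}\int_a^\tau(\tau-s)^{n-1}g(s)\,ds$. Applying Taylor's theorem with the integral form of the remainder to $h$ — which is valid precisely under the assumption $h\in C^n[a,b]$ or $h\in AC^n[a,b]$ — gives
\[
\mathfrak{I}_a^n h^{(n)}(\tau)=\frac{1}{(n-1)!}\int_a^\tau(\tau-s)^{n-1}h^{(n)}(s)\,ds
=h(\tau)-\sum_{k=0}^{n-1}\frac{h^{(k)}(a)}{k!}(\tau-a)^{k},
\]
which is the asserted identity for $\tau>a$. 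The only genuinely delicate point is the semigroup step in the $AC^n$ case, where continuity of $h^{(n)}$ is unavailable and one must fall back on the $L^1$ version of the semigroup law; the remaining steps are routine manipulation of the definitions and the standard Taylor expansion.
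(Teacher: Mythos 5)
The paper gives no proof of this lemma; it is imported verbatim from \cite{Kilbas} (where it appears as Lemma 2.22), so there is nothing to compare against except the standard textbook argument. Your proposal is that argument, carried out correctly: unwinding the Caputo derivative, collapsing $\mathfrak{I}_a^{\alpha}\mathfrak{I}_a^{n-\alpha}h^{(n)}$ to $\mathfrak{I}_a^{n}h^{(n)}$ via the semigroup law, and finishing with Taylor's formula with integral remainder. You are also right to flag that the paper's Lemma \ref{lem:2.2.2} is stated only for $C_{\eta}[a,b]$ and hence does not literally cover the $AC^{n}$ case, where $h^{(n)}$ is only in $L^{1}$ and one must invoke the $L^{1}$ version of the semigroup property; with that substitution the proof is complete and has no gaps.
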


\begin{lemma} \label{lem:4.2.1}
 Let $0<\beta \leq \alpha <1$. If $h\in AC[ a,b]$, then
\[
^C \mathfrak{D}_0^{\beta }h=\mathfrak{I}_0^{\alpha -\beta }{}^C \mathfrak{D}_0^{\alpha }h.
\]
\end{lemma}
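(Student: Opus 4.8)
**Proof proposal for Lemma 2.14 (the identity $^{C}\mathfrak{D}_{0}^{\beta}h = \mathfrak{I}_{0}^{\alpha-\beta}\,{}^{C}\mathfrak{D}_{0}^{\alpha}h$ for $0<\beta\le\alpha<1$ and $h\in AC[a,b]$).**

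The plan is to expand both Caputo derivatives through their defining formula $^{C}\mathfrak{D}_{0}^{\gamma}h = \mathfrak{I}_{0}^{1-\gamma}h'$ (valid since $0<\gamma<1$ and $h\in AC[a,b]$, so $h'\in L^{1}$), and then reduce the claimed identity to the semigroup property of the Riemann–Liouville integral. Concretely, write $^{C}\mathfrak{D}_{0}^{\beta}h = \mathfrak{I}_{0}^{1-\beta}h'$ on the left, and on the right $\mathfrak{I}_{0}^{\alpha-\beta}\,{}^{C}\mathfrak{D}_{0}^{\alpha}h = \mathfrak{I}_{0}^{\alpha-\beta}\bigl(\mathfrak{I}_{0}^{1-\alpha}h'\bigr)$. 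So the statement is equivalent to
\[
\mathfrak{I}_{0}^{\alpha-\beta}\mathfrak{I}_{0}^{1-\alpha}h' = \mathfrak{I}_{0}^{1-\beta}h',
\]
which is exactly Lemma \ref{lem:2.2.2} with orders $\alpha-\beta>0$ (or $=0$, trivial, when $\beta=\alpha$) and $1-\alpha>0$ summing to $1-\beta$, applied to the function $h'$.

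First I would check the hypotheses needed to invoke Lemma \ref{lem:2.2.2}: that lemma requires the integrand to lie in $C_{\eta}[a,b]$ for some $0\le\eta<1$. Here the integrand is $h'$, which for $h\in AC[a,b]$ is merely in $L^{1}[a,b]$, not necessarily in $C_{\eta}[a,b]$. This is the main obstacle, and there are two ways around it. One option is to prove a version of the semigroup property directly for $L^{1}$ functions — this is standard (it follows from Fubini's theorem and the Beta-integral identity $\int_{s}^{\tau}(\tau-r)^{\mu-1}(r-s)^{\nu-1}\,dr = B(\mu,\nu)(\tau-s)^{\mu+\nu-1}$) and is in fact the classical Riemann–Liouville semigroup law; I would state it as a one-line computation. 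The cleaner option, consistent with the tools already assembled in the excerpt, is: since $h\in AC[a,b]$, Lemma \ref{lem:2.4.1} (with $n=1$) gives $\mathfrak{I}_{0}^{1}\,{}^{C}\mathfrak{D}_{0}^{1}h = \mathfrak{I}_{0}^{1}h' = h(\tau)-h(a)$, so $\mathfrak{I}_{0}^{1}h'$ is continuous, hence $h' = D\bigl(\mathfrak{I}_{0}^{1}h'\bigr)$ in the a.e. sense and, more usefully, $\mathfrak{I}_{0}^{1-\alpha}h' = \mathfrak{D}_{0}^{\alpha}\bigl(\mathfrak{I}_{0}^{1}h'\bigr)$ can be rewritten in terms of the continuous function $g:=\mathfrak{I}_{0}^{1}h'\in C[a,b]\subset C_{0}[a,b]$. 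Then $\mathfrak{I}_{0}^{1-\alpha}h'=\mathfrak{I}_{0}^{-\alpha}g$ is not quite what I want; instead note $\mathfrak{I}_{0}^{1-\beta}h' = \mathfrak{I}_{0}^{1-\beta}\bigl(\frac{d}{d\tau}g\bigr)$ and use $\mathfrak{I}_{0}^{1-\beta} = \mathfrak{I}_{0}^{\alpha-\beta}\mathfrak{I}_{0}^{1-\alpha}$ applied to $h'\in L^{1}$.

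Given the pitfalls of re-routing through continuity, I would in the end simply invoke the $L^{1}$ semigroup law for Riemann–Liouville integrals — which holds for all of $L^{1}[a,b]$ and in particular for $h'$ — and note that it specializes to Lemma \ref{lem:2.2.2} on the subspace $C_{\eta}[a,b]$. The full argument is then three lines: (1) $h\in AC[a,b]\Rightarrow h'\in L^{1}[a,b]$ and $^{C}\mathfrak{D}_{0}^{\gamma}h = \mathfrak{I}_{0}^{1-\gamma}h'$ for $\gamma\in\{\beta,\alpha\}$ by Definition \ref{def:2.2.4}; (2) $\mathfrak{I}_{0}^{\alpha-\beta}\mathfrak{I}_{0}^{1-\alpha}h' = \mathfrak{I}_{0}^{(\alpha-\beta)+(1-\alpha)}h' = \mathfrak{I}_{0}^{1-\beta}h'$ by the semigroup property (Fubini); (3) combine to get $\mathfrak{I}_{0}^{\alpha-\beta}\,{}^{C}\mathfrak{D}_{0}^{\alpha}h = \mathfrak{I}_{0}^{1-\beta}h' = {}^{C}\mathfrak{D}_{0}^{\beta}h$. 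The degenerate case $\beta=\alpha$ is the trivial identity $\mathfrak{I}_{0}^{0} = \mathrm{id}$. I expect the only real care needed is the justification that the orders $\alpha-\beta$ and $1-\alpha$ are both nonnegative and sum to $1-\beta$, and that all the integrals converge for $L^{1}$ data, which Fubini handles uniformly.
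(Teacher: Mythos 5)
Your proof is correct and follows essentially the same route as the paper: both write $^{C}\mathfrak{D}_{0}^{\gamma}h=\mathfrak{I}_{0}^{1-\gamma}h'$ via Definition \ref{def:2.2.4} and then apply the semigroup property $\mathfrak{I}_{0}^{\alpha-\beta}\mathfrak{I}_{0}^{1-\alpha}h'=\mathfrak{I}_{0}^{1-\beta}h'$. Your observation that $h'$ is only in $L^{1}$ rather than $C_{\eta}[a,b]$, so that the $L^{1}$ (Fubini) version of the semigroup law is the right tool, is a legitimate point of care that the paper's own one-line proof passes over silently by citing Lemma \ref{lem:2.2.2} directly.
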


\begin{proof}
From Definition \ref{def:2.2.4} and  Lemma \ref{lem:2.2.2}, we have
\[
^C \mathfrak{D}_0^{\beta }h=\mathfrak{I}_0^{1-\beta }h'=
\mathfrak{I}_0^{\alpha -\beta }\mathfrak{I}_0^{1-\alpha }h'=
\mathfrak{I}_0^{\alpha -\beta }{}^C \mathfrak{D}_0^{\alpha }h.
\]
\end{proof}

\begin{lemma}[\cite{K3}] \label{lem:3.1.2}
 Let $f\in L_1(0,\infty) $. Then
\[
\lim_{\tau \to \infty }\frac{1}{\tau^{\alpha }}\mathfrak{I}
_0^{\alpha +1}f(\tau) =\frac{1}{\Gamma (\alpha
+1) }\int_0^{\infty }f(s) ds=\frac{1}{\Gamma (
\alpha +1) }\mathfrak{I}_0^{1}f(\infty) ,\ \alpha >0.
\]
\end{lemma}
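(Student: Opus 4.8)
The plan is to unwind the definition of the Riemann--Liouville integral and reduce the claim to an application of the dominated convergence theorem. By Definition \ref{def:2.2.1},
\[
\frac{1}{\tau^{\alpha}}\mathfrak{I}_0^{\alpha+1}f(\tau)
=\frac{1}{\Gamma(\alpha+1)}\cdot\frac{1}{\tau^{\alpha}}\int_0^{\tau}(\tau-s)^{\alpha}f(s)\,ds
=\frac{1}{\Gamma(\alpha+1)}\int_0^{\tau}\Big(1-\frac{s}{\tau}\Big)^{\alpha}f(s)\,ds,
\]
so it suffices to prove that $\int_0^{\tau}(1-s/\tau)^{\alpha}f(s)\,ds\to\int_0^{\infty}f(s)\,ds$ as $\tau\to\infty$, after which one divides by $\Gamma(\alpha+1)$ and recalls that $\mathfrak{I}_0^{1}f(\infty)=\int_0^{\infty}f(s)\,ds$.

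First I would rewrite the left-hand integral over the whole half-line by inserting the indicator of $[0,\tau]$: set $g_{\tau}(s)=\chi_{[0,\tau]}(s)\,(1-s/\tau)^{\alpha}f(s)$ for $s>0$. Since $\alpha>0$ and $0\le 1-s/\tau\le 1$ for $s\in[0,\tau]$, we have $|g_{\tau}(s)|\le|f(s)|$ for every $s>0$ and every $\tau>0$, and $|f|\in L_1(0,\infty)$ by hypothesis; this supplies the dominating function. Next, for each fixed $s>0$, once $\tau>s$ we have $g_{\tau}(s)=(1-s/\tau)^{\alpha}f(s)\to f(s)$ as $\tau\to\infty$, so $g_{\tau}\to f$ pointwise almost everywhere on $(0,\infty)$.

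The dominated convergence theorem then yields $\int_0^{\infty}g_{\tau}(s)\,ds\to\int_0^{\infty}f(s)\,ds$, which is exactly the required limit, completing the proof. There is no serious obstacle here: the only point deserving a word of care is the uniform bound $|g_{\tau}(s)|\le|f(s)|$, which relies on $\alpha>0$ so that $t\mapsto t^{\alpha}$ is nondecreasing on $[0,1]$. A DCT-free alternative would split the integral over $[0,\sqrt{\tau}]$ and $[\sqrt{\tau},\tau]$, bounding the first piece via $\sup_{0\le s\le\sqrt\tau}|1-(1-s/\tau)^{\alpha}|\to 0$ and the second via the absolute continuity of $s\mapsto\int_0^{s}|f|$, but the dominated convergence argument is shorter and cleaner.
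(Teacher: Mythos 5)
Your argument is correct and complete: after unwinding Definition \ref{def:2.2.1} the kernel becomes $(1-s/\tau)^{\alpha}$, which is bounded by $1$ on $[0,\tau]$ because $\alpha>0$, and tends to $1$ pointwise, so dominated convergence with dominating function $|f|\in L_1(0,\infty)$ gives the limit. The paper itself offers no proof of Lemma \ref{lem:3.1.2} (it is quoted from \cite{K3}), and your dominated-convergence argument is the standard one for this statement, so there is nothing to flag.
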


\begin{lemma}\label{lem:4.1.1}
Let $0<\alpha <1$ and $0\leq \eta <1$.
 Assume that $x\in AC[0,\infty)$ and $\mathfrak{I}_0^{1-\alpha }x'\in C_{\eta }^{1}[0,\infty)$.
Then
\begin{equation}
\lim_{\tau \to \infty }\frac{x(\tau) }{\tau^{\alpha }}
=\lim_{\tau \to \infty }\frac{^C \mathfrak{D}_0^{\alpha
}x(\tau) }{\Gamma (1+\alpha) }.  \label{C4.5}
\end{equation}
\end{lemma}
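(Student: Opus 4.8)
The plan is to exploit the representation of $x$ furnished by Lemma \ref{lem:2.4.1} in the case $n=1$, namely $x(\tau)=b_1+\mathfrak{I}_0^{\alpha}\,{}^C\mathfrak{D}_0^{\alpha}x(\tau)$ with $b_1=x(0)$, and then evaluate the limit of $\mathfrak{I}_0^{\alpha}\,{}^C\mathfrak{D}_0^{\alpha}x(\tau)/\tau^{\alpha}$. Writing $g:={}^C\mathfrak{D}_0^{\alpha}x=\mathfrak{I}_0^{1-\alpha}x'$, the hypothesis $\mathfrak{I}_0^{1-\alpha}x'\in C_{\eta}^1[0,\infty)$ says precisely that $g$ is continuously differentiable (after the weight is accounted for at $0$) and in particular $g$ has a limit $\ell:=\lim_{\tau\to\infty}g(\tau)$ whenever that limit exists — but we should not assume it does; instead I would argue the Cesàro/Abel-type averaging built into $\mathfrak{I}_0^{\alpha}$ directly. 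The key computation is
\[
\frac{1}{\tau^{\alpha}}\mathfrak{I}_0^{\alpha}g(\tau)
=\frac{1}{\Gamma(\alpha)\,\tau^{\alpha}}\int_0^{\tau}(\tau-s)^{\alpha-1}g(s)\,ds,
\]
and I claim this tends to $\ell/\Gamma(1+\alpha)$. Indeed, $\frac{1}{\Gamma(\alpha)\tau^{\alpha}}\int_0^{\tau}(\tau-s)^{\alpha-1}\,ds=\frac{1}{\Gamma(1+\alpha)}$ exactly (by Lemma \ref{pro:2.2.1} with $\beta=1$), so it suffices to show $\frac{1}{\Gamma(\alpha)\tau^{\alpha}}\int_0^{\tau}(\tau-s)^{\alpha-1}\bigl(g(s)-\ell\bigr)\,ds\to0$; splitting the integral at a point $T$ beyond which $|g(s)-\ell|<\varepsilon$ and noting $\int_0^T(\tau-s)^{\alpha-1}|g(s)-\ell|\,ds=O(\tau^{\alpha-1})$ while the tail is bounded by $\varepsilon/\Gamma(1+\alpha)$, the claim follows. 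This is exactly a fractional L'Hopital / Karamata-type statement, and it is the analytic heart of the lemma.

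The genuine obstacle is justifying that $g={}^C\mathfrak{D}_0^{\alpha}x$ actually \emph{has} a finite limit $\ell$ at infinity, since $a\ priori$ the hypotheses only give $x\in AC[0,\infty)$ and $\mathfrak{I}_0^{1-\alpha}x'\in C_{\eta}^1[0,\infty)$, which is a local regularity condition and does not by itself force convergence of $g$. I would handle this by reformulating \eqref{C4.5} as the assertion that the two sides are equal \emph{in the extended sense}: if either limit exists (finite or $\pm\infty$), so does the other and they agree. Concretely, for the direction that matters in the applications, one assumes $\lim_{\tau\to\infty}{}^C\mathfrak{D}_0^{\alpha}x(\tau)=\ell$ exists and deduces $\lim_{\tau\to\infty}x(\tau)/\tau^{\alpha}=\ell/\Gamma(1+\alpha)$ by the averaging argument above together with $b_1/\tau^{\alpha}\to0$; for the converse one can invoke monotonicity or the structure coming from Lemma \ref{lem:3.1.2} applied to $g'$ (writing $g(\tau)-g(0)=\mathfrak{I}_0^1 g'(\tau)$ and relating $\mathfrak{I}_0^{\alpha}g=\mathfrak{I}_0^{\alpha+1}g'+g(0)\tau^{\alpha}/\Gamma(1+\alpha)$ via Lemma \ref{lem:2.2.2}), though this converse is the more delicate half and may require $g'\in L_1$ or a similar integrability tail, which the ambient hypotheses in the paper presumably supply.

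So the skeleton is: (1) apply Lemma \ref{lem:2.4.1} to write $x(\tau)=x(0)+\mathfrak{I}_0^{\alpha}g(\tau)$ with $g={}^C\mathfrak{D}_0^{\alpha}x$; (2) divide by $\tau^{\alpha}$, discard the $x(0)/\tau^{\alpha}\to0$ term; (3) use Lemma \ref{pro:2.2.1} with $\beta=1$ to identify $\mathfrak{I}_0^{\alpha}1=\tau^{\alpha}/\Gamma(1+\alpha)$, reducing everything to showing $\tau^{-\alpha}\mathfrak{I}_0^{\alpha}(g-\ell)\to0$; (4) prove that last limit by the split-at-$T$ estimate, controlling the near-origin part by the $(\tau-s)^{\alpha-1}\le$ (const) and $\tau^{-\alpha}\to0$ bound and the far part by the definition of $\ell$ and the exact computation in (3). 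The main work — and the point where I'd expect to lean on earlier lemmas, especially Lemma \ref{lem:3.1.2} and the composition rules — is ensuring $g$ converges, i.e.\ that the fractional L'Hopital rule can be stated as an equivalence of limits rather than a one-way implication.
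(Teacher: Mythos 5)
Your proof is correct for the direction of \eqref{C4.5} that the paper actually uses (existence of $\lim_{\tau\to\infty}{}^C\mathfrak{D}_0^{\alpha}x(\tau)=\ell$ implies $x(\tau)/\tau^{\alpha}\to\ell/\Gamma(1+\alpha)$), but your route is genuinely different from the paper's. You start from Lemma \ref{lem:2.4.1}, $x(\tau)=x(0)+\mathfrak{I}_0^{\alpha}g(\tau)$ with $g={}^C\mathfrak{D}_0^{\alpha}x$, and prove the key limit by a bare-hands $\varepsilon$--$T$ splitting of the kernel $(\tau-s)^{\alpha-1}$; the paper instead applies Lemma \ref{lem:2.2.5} to $x'$, integrates to get $x(\tau)=x(0)+\tfrac{\mathfrak{I}_0^{1-\alpha}x'(0)}{\Gamma(\alpha+1)}\tau^{\alpha}+\mathfrak{I}_0^{1+\alpha}\mathfrak{D}_0^{\alpha}x'(\tau)$, and then invokes Lemma \ref{lem:3.1.2} with $f=\mathfrak{D}_0^{\alpha}x'=g'$ before reassembling $\mathfrak{I}_0^{1}\mathfrak{D}_0^{\alpha}x'={}^C\mathfrak{D}_0^{\alpha}x-{}^C\mathfrak{D}_0^{\alpha}x(0)$. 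Your version buys simplicity and economy of hypotheses: it needs only $x\in AC[0,\infty)$ (for Lemma \ref{lem:2.4.1}) plus enough local integrability of $g$ near the origin to make the $O(\tau^{\alpha-1})$ estimate work, whereas the paper needs $\mathfrak{I}_0^{1-\alpha}x'\in C_{\eta}^{1}$ to run Lemma \ref{lem:2.2.5}. The issue you flag as "the genuine obstacle" --- that nothing in the stated hypotheses forces $g$ to converge at infinity --- is real, and the paper does not escape it either: its appeal to Lemma \ref{lem:3.1.2} silently imports the extra assumption $\mathfrak{D}_0^{\alpha}x'\in L_1(0,\infty)$, which is precisely what makes $g(\tau)=g(0)+\mathfrak{I}_0^{1}g'(\tau)$ converge; that assumption is verified in the applications (Theorems \ref{thm:4.1.1} and \ref{thm:4.2.1}) but is absent from the lemma as stated. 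Your decision to read \eqref{C4.5} as a conditional equality of limits, proving the implication from the right-hand side to the left-hand side, is both honest and exactly sufficient for how the lemma is deployed, so no substantive gap remains in your argument.
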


\begin{proof}
Since $x\in AC[0,\infty)\subset C_{\eta }[0,\infty)$  and
$\mathfrak{I}_0^{1-\alpha }x'\in C_{\eta }^{1}[0,\infty)$, we can
use Lemma \ref{lem:2.2.5} to obtain
\begin{equation}
\mathfrak{I}_0^{\alpha }\mathfrak{D}_0^{\alpha }x'(\tau
) =x'(\tau) -\frac{\mathfrak{I}_0^{1-\alpha
}x'(0) }{\Gamma (\alpha) }\tau^{\alpha
-1},\quad \tau >0.  \label{C6}
\end{equation}
Applying $\mathfrak{I}_0^{1}$ to both sides of \eqref{C6}, using Lemma
\ref{pro:2.2.1} (with $\beta =\alpha $) and Lemma \ref{lem:2.2.2}, we obtain
\begin{equation}
x(\tau) =x(0) +\frac{\mathfrak{I}_0^{1-\alpha
}x'(0) }{\Gamma (\alpha +1)}\tau^{\alpha }+\mathfrak{I}
_0^{1+\alpha }\mathfrak{D}_0^{\alpha }x'(\tau) ,
\quad \tau >0.  \label{C6.5}
\end{equation}
Dividing both sides of \eqref{C6.5} by $\tau^{\alpha }$, we obtain
\[
\frac{x(\tau) }{\tau^{\alpha }}=\frac{x(0) }{\tau
^{\alpha }}+\frac{\mathfrak{I}_0^{1-\alpha }x'(0) }{
\Gamma (\alpha +1)}+\frac{1}{\tau^{\alpha }}\mathfrak{I}_0^{1+\alpha }
\mathfrak{D}_0^{\alpha }x'(\tau) ,\quad \tau >0.
\]
Next, we take the limit as $\tau \to \infty $, we arrive at
\begin{equation}
\lim_{\tau \to \infty }\frac{x(\tau) }{\tau^{\alpha }}=
\frac{\mathfrak{I}_0^{1-\alpha }x'(0) }{\Gamma (
1+\alpha) }+\frac{1}{\Gamma (1+\alpha) }\lim_{\tau
\to \infty }\mathfrak{I}_0^{1}\mathfrak{D}_0^{\alpha }x'(\tau) ,  \label{C6.66}
\end{equation}
where we  used Lemma \ref{lem:3.1.2}. Moreover, we conclude that
\begin{equation}
\mathfrak{I}_0^{1}\mathfrak{D}_0^{\alpha }x'(\tau)
=\mathfrak{I}_0^{1}D\mathfrak{I}_0^{1-\alpha }x'(\tau
) =\mathfrak{I}_0^{1-\alpha }x'(\tau) -
\mathfrak{I}_0^{1-\alpha }x'(0) ={}^C
\mathfrak{D}_0^{\alpha }x(\tau) -{}^C \mathfrak{D}
_0^{\alpha }x(0) ,\quad \tau >0,  \label{C6.67}
\end{equation}
and \eqref{C4.5} follows directly from \eqref{C6.66} and \eqref{C6.67}.
\end{proof}

\section{Some useful inequalities} \label{sec3}

First, we define the following special classes of functions
\begin{gather}
H_k=\{ h\in L_1(0,\infty) : h\text{ is positive and }
s^{k}h\in L_1(1,\infty) ,\; k>-1\}, \label{space_l} \\
\begin{aligned}
M=\big\{&F:(0,\infty) \times \mathbb{R}
_{+}\to \mathbb{R}_{+}\text{ where }0\leq F(\tau ,s)
-F(\tau ,r) \leq H(\tau) (s-r) ,\\
&\text{for some continuous function $H$ on }\mathbb{R}_{+},\;
s\geq r\geq 0\text{ and }\tau >0\big\}.
\end{aligned}  \label{space_h} \\
\begin{aligned}
\Phi=\big\{&\varphi \in C(0,\infty) :\varphi \text{ is
nondecreasing and positive on }(0,\infty) , \\
&\frac{1}{v}\varphi (w) \leq \varphi
(\frac{w}{v}) ,\; w>0,\; v\geq 1\big\}.
\end{aligned}\label{space_f}
\end{gather}
The proofs of the following lemmas are based on an application of the Bihari
inequality which is a generalization of the Gronwall inequality.

\begin{lemma}[\cite{K3}] \label{lem:3.1.1}
Let $g(\tau) $ and $ z(\tau)$ be nonnegative continuous functions defined for $\tau \geq 0$,
$\varphi \in \Phi$ and $c_i\in \mathbb{R}$, $i=1,2,3$. Then
\begin{equation}
z(\tau) \leq c_1+c_2\tau^{\gamma }+c_3\tau^{\gamma
}\int_0^{\tau }g(s)\varphi (z(s)) ds,\text{\ \ }
\tau ,\quad \gamma \geq 0,  \label{r2.55}
\end{equation}
implies
\begin{equation}
z(\tau) \leq \begin{cases}
E^{-1}(E(| c_1| +|
c_2|) +| c_3| \int_0^{\tau
}g(s)ds) ,& 0\leq \tau <1 \\[4pt]
\tau^{\gamma }E^{-1}(E(A) +| c_3|
\int_1^{\tau }s^{\gamma }g(s)ds) , & \tau \geq 1,
\end{cases}  \label{r2.66}
\end{equation}
where
\begin{gather*}
A=| c_1| +| c_2| +|c_3| \varphi (E^{-1}(C)) \int_0^{1}g(s)ds, \\
C=E(| c_1| +| c_2|) +| c_3| \int_0^{1}g(s)ds<\infty ,
\end{gather*}
and $E^{-1}$ is the inverse function of
\[
E(\mathfrak{\xi }) =\int_{\mathfrak{\xi }_0}^{\mathfrak{\xi }}
\frac{ds}{\varphi (s) }.
\]
\end{lemma}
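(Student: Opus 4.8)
The plan is to derive \eqref{r2.66} from the classical Bihari inequality applied twice: once on $[0,1)$, where the factor $\tau^{\gamma}$ is harmless because $\gamma\ge0$, and once on $[1,\infty)$ after dividing the hypothesis through by $\tau^{\gamma}$. The only ingredient beyond Bihari's inequality is the sub-homogeneity property built into the class $\Phi$ in \eqref{space_f}, which is what allows the weight $\tau^{\gamma}$ to be absorbed.

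First I would treat $0\le\tau<1$. Since $\gamma\ge0$ we have $\tau^{\gamma}\le1$, so \eqref{r2.55} gives $z(\tau)\le|c_1|+|c_2|+|c_3|\int_0^{\tau}g(s)\varphi(z(s))\,ds$. As $\varphi$ is nondecreasing and positive, the Bihari inequality (with constant $|c_1|+|c_2|$ and kernel $|c_3|g$) yields the first branch of \eqref{r2.66}. The hypothesis $C<\infty$ is used precisely here: it guarantees that the argument of $E^{-1}$ stays in the range of $E$ throughout $[0,1]$, and since $g\ge0$ and $E^{-1}$ is increasing (because $E'=1/\varphi>0$), it also shows $z(s)\le E^{-1}(C)$ for every $s\in[0,1)$, a bound needed in the next step.

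Next, for $\tau\ge1$ I would divide \eqref{r2.55} by $\tau^{\gamma}\ge1$ and set $w(\tau):=z(\tau)\tau^{-\gamma}$. Estimating $c_1\tau^{-\gamma}+c_2\le|c_1|+|c_2|$ and splitting $\int_0^{\tau}=\int_0^1+\int_1^{\tau}$, the contribution of $\int_0^1$ is controlled using the first step: $\varphi(z(s))\le\varphi(E^{-1}(C))$ on $[0,1)$, so that piece is at most $|c_3|\varphi(E^{-1}(C))\int_0^1g(s)\,ds$, and the free constant collapses to exactly $A$. Thus $w(\tau)\le A+|c_3|\int_1^{\tau}g(s)\varphi(z(s))\,ds$. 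Now for $s\ge1$ we have $s^{\gamma}\ge1$, so applying the defining inequality of $\Phi$, namely $\tfrac1v\varphi(u)\le\varphi(u/v)$, with $v=s^{\gamma}$ and $u=s^{\gamma}w(s)$, gives $\varphi(z(s))=\varphi(s^{\gamma}w(s))\le s^{\gamma}\varphi(w(s))$. Substituting, $w(\tau)\le A+|c_3|\int_1^{\tau}s^{\gamma}g(s)\varphi(w(s))\,ds$ for $\tau\ge1$, which is now a genuine Bihari inequality for $w$ on $[1,\infty)$ with constant $A$ and kernel $|c_3|s^{\gamma}g(s)$. A second application of Bihari's inequality gives $w(\tau)\le E^{-1}\big(E(A)+|c_3|\int_1^{\tau}s^{\gamma}g(s)\,ds\big)$, and multiplying back by $\tau^{\gamma}$ produces the second branch of \eqref{r2.66}.

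The main obstacle is the middle maneuver: the weight $\tau^{\gamma}$ multiplying the integral blocks a direct use of Bihari on $z$ itself, and the resolution is to recognize that this weight can be transferred \emph{inside} $\varphi$ via the sub-homogeneity condition defining $\Phi$, after which everything reduces to two routine Bihari estimates and the bookkeeping that assembles the constant $A$. Besides this, one only needs the usual care about domains: $C<\infty$ must hold for $E^{-1}(C)$ — and hence $A$ — to be meaningful, and one should check that the arguments of $E^{-1}$ in both branches lie in the range of $E$, which follows from the monotone nature of the construction together with the finiteness of $\int_0^1 g$ and of $\int_1^{\tau}s^{\gamma}g(s)\,ds$ on compact intervals.
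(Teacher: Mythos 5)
Your proof is correct and follows essentially the same route as the cited source \cite{K3}: split at $\tau=1$, apply Bihari's inequality directly on $[0,1)$, then divide by $\tau^{\gamma}$ on $[1,\infty)$, absorb the $[0,1]$ portion of the integral into the constant $A$ via the bound $z\le E^{-1}(C)$, and transfer the weight $s^{\gamma}$ inside $\varphi$ using the sub-homogeneity condition defining $\Phi$. The paper itself gives no proof (it only cites \cite{K3}), but the specific form of the constants $A$ and $C$ in the statement is exactly what your decomposition produces, so your reconstruction is the intended argument.
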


\begin{lemma}[\cite{K3}] \label{lem:4.2.2}
Let $z(\tau)$ satisfy
\begin{equation}
z(\tau) \leq c_1\tau^{\gamma }+c_2\tau^{\gamma
}\int_0^{\tau }[ F_1(s,z(s) +c_3)
+F_2(s,z(s) +c_{4}) +h(s) ] ds, \quad
\tau \geq 0,  \label{cd25}
\end{equation}
where $h:C[\mathbb{R}_{+},\mathbb{R}_{+}] $,
$F_{j}\in M$, $j=1,2$ and $\gamma $, $c_i>0$, $i=1,2,3,4$. Then
\begin{equation}
z(\tau) \leq \tau^{\gamma }f(\tau) ,\quad \tau >0,  \label{cd26}
\end{equation}
where
\begin{equation}
\begin{aligned}
f(\tau) &=\Big(c_1+c_2\int_0^{\tau }[ F_1(
s,c_3) +F_2(s,c_{4}) +h(s) ]ds\Big) \\
&\quad \times \exp \Big(c_2\int_0^{\tau }s^{\gamma }[ N_1(
s) +N_2(s) ] ds\Big),\quad \tau >0,
\end{aligned} \label{cd27}
\end{equation}
with $N_1$ and $N_2$ are as in the definition of $M$
corresponding to $F_1$ and $F_2$, respectively.
\end{lemma}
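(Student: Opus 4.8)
The plan is to linearize the two nonlinear terms by exploiting the structure that defines the class $M$, thereby reducing \eqref{cd25} to a linear Gronwall-type inequality for the rescaled function $v(\tau):=z(\tau)/\tau^{\gamma}$, and then to integrate that inequality in closed form.

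First I would record what membership in $M$ gives: for $j=1,2$ there is a continuous function $N_{j}$ on $\mathbb{R}_{+}$ with
\[
0\le F_{j}(\tau,s)-F_{j}(\tau,r)\le N_{j}(\tau)(s-r),\qquad s\ge r\ge 0,\ \tau>0.
\]
Since $z$ is nonnegative and continuous, choosing $s=z(s)+c_{3}$, $r=c_{3}$ for $F_{1}$ and $s=z(s)+c_{4}$, $r=c_{4}$ for $F_{2}$ yields
\[
F_{1}\bigl(s,z(s)+c_{3}\bigr)\le F_{1}(s,c_{3})+N_{1}(s)z(s),\qquad F_{2}\bigl(s,z(s)+c_{4}\bigr)\le F_{2}(s,c_{4})+N_{2}(s)z(s).
\]
Inserting these bounds into \eqref{cd25} and then dividing by $\tau^{\gamma}$ (for $\tau>0$), while writing $z(s)=s^{\gamma}\cdot\bigl(z(s)/s^{\gamma}\bigr)$ inside the remaining integral, I obtain
\[
v(\tau)\le \phi(\tau)+c_{2}\int_{0}^{\tau}s^{\gamma}\bigl[N_{1}(s)+N_{2}(s)\bigr]v(s)\,ds,\qquad \tau>0,
\]
where $\phi(\tau):=c_{1}+c_{2}\int_{0}^{\tau}\bigl[F_{1}(s,c_{3})+F_{2}(s,c_{4})+h(s)\bigr]ds$ is nonnegative and nondecreasing (its integrand is nonnegative) and $s\mapsto s^{\gamma}[N_{1}(s)+N_{2}(s)]$ is nonnegative and continuous.

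The last step is an application of the classical Gronwall inequality — equivalently, of Bihari's inequality with $\varphi(w)=w\in\Phi$, which places this lemma on the same footing as Lemma \ref{lem:3.1.1}. Because $\phi$ is nondecreasing, the general Gronwall estimate collapses to $v(\tau)\le \phi(\tau)\exp\bigl(c_{2}\int_{0}^{\tau}s^{\gamma}[N_{1}(s)+N_{2}(s)]\,ds\bigr)$, and multiplying through by $\tau^{\gamma}$ gives exactly $z(\tau)\le \tau^{\gamma}f(\tau)$ with $f$ as in \eqref{cd27}. The part needing a little care is purely technical: one must check that $v=z/\tau^{\gamma}$ is legitimate input for Gronwall's lemma, i.e.\ that it is continuous on $(0,\infty)$ and stays bounded as $\tau\to0^{+}$ (both are immediate from \eqref{cd25}), that the sign condition $z\ge0$ is in force so that the monotonicity half of the definition of $M$ may be used, and that $\int_{0}^{\tau}s^{\gamma}[N_{1}(s)+N_{2}(s)]\,ds<\infty$ so that $f$ is well defined. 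With those points settled, the linearization-plus-Gronwall scheme above completes the proof.
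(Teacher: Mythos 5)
Your proof is correct and follows the intended route: the paper does not reproduce a proof of this lemma (it is quoted from \cite{K3}), but it announces that the lemmas of Section \ref{sec3} rest on the Bihari inequality, and your argument --- linearizing $F_1$, $F_2$ via the Lipschitz-type bound built into the class $M$ and then applying Gronwall (i.e.\ Bihari with $\varphi(w)=w$) to $v=z/\tau^{\gamma}$ --- is exactly that, and it reproduces \eqref{cd27} verbatim. The technical caveats you flag ($z\geq 0$ so the monotonicity half of the definition of $M$ applies, boundedness of $v$ near $0$, and local integrability of $s^{\gamma}[N_1(s)+N_2(s)]$ and of $F_j(s,c_j)$, $h$) are the right ones, and all hold in the setting where the lemma is invoked.
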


\begin{remark} \label{rem:equiv} \rm
If $p$, $q>1$ and $\frac{1}{p}+\frac{1}{q}=1$, then for $
\alpha >0$, $p(\alpha -1)+1>0\Longleftrightarrow q\alpha >1$.
\end{remark}

\begin{lemma}[\cite{K4}] \label{lem:rec2}
If $\upsilon, \lambda +1>1/r$,  for some $r>1$, and $g$ is a
nonnegative continuous function defined on $[0,\infty)$,  then
\begin{equation}
\int_0^{\tau }(\tau -s)^{\upsilon -1}s^{\lambda }g(
s) ds\leq C\tau^{\upsilon +\lambda -1/r}\Big(\int_0^{\tau
}g^{r}(s) ds\Big)^{1/r},\quad \tau >0,  \label{sr66}
\end{equation}
where
\[
C=K_{p(\upsilon -1) +1,p\lambda }=\frac{\Gamma (p\lambda
+1) \Gamma (p(\upsilon -1) +1) }{\Gamma (
p\lambda +p(\upsilon -1) +2) },\quad \frac{1}{p}+\frac{1}{r}=1.
\]
\end{lemma}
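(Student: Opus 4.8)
The plan is to derive \eqref{sr66} from a single application of H\"older's inequality with the conjugate exponents $p$ and $r$ (so that $1/p+1/r=1$ as in the statement), followed by the explicit evaluation of the resulting kernel integral as a Beta function.

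First I would view the integrand as the product of the weight $(\tau-s)^{\upsilon-1}s^{\lambda}$ and $g(s)$, and apply H\"older's inequality to obtain
\[
\int_0^{\tau}(\tau-s)^{\upsilon-1}s^{\lambda}g(s)\,ds
\leq\Big(\int_0^{\tau}(\tau-s)^{p(\upsilon-1)}s^{p\lambda}\,ds\Big)^{1/p}
\Big(\int_0^{\tau}g^{r}(s)\,ds\Big)^{1/r}.
\]
For the first factor on the right to be finite one needs $p(\upsilon-1)+1>0$ and $p\lambda+1>0$; by Remark \ref{rem:equiv}, taking $q=r$ and setting $\alpha=\upsilon$ in one instance and $\alpha=\lambda+1$ in the other, these two conditions are precisely the hypotheses $\upsilon>1/r$ and $\lambda+1>1/r$. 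Since $g$ is continuous, the second factor is finite on the compact interval $[0,\tau]$, so the estimate is legitimate for every $\tau>0$.

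Next I would evaluate the kernel integral by the substitution $s=\tau t$, $0\le t\le1$, which factors out the power of $\tau$ and leaves a Beta integral:
\[
\int_0^{\tau}(\tau-s)^{p(\upsilon-1)}s^{p\lambda}\,ds
=\tau^{p(\upsilon-1)+p\lambda+1}\int_0^{1}(1-t)^{p(\upsilon-1)}t^{p\lambda}\,dt
=\tau^{p(\upsilon-1)+p\lambda+1}\,\frac{\Gamma(p\lambda+1)\Gamma(p(\upsilon-1)+1)}{\Gamma(p\lambda+p(\upsilon-1)+2)}.
\]
Raising this to the power $1/p$ and using $1/p=1-1/r$, the exponent of $\tau$ becomes $(\upsilon-1)+\lambda+1/p=\upsilon+\lambda-1/r$, exactly the exponent in \eqref{sr66}, while the numerical factor is the constant $K_{p(\upsilon-1)+1,p\lambda}$ carried to the outer power $1/p$ inherited from H\"older. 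Substituting back into the H\"older estimate then yields the claimed inequality.

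I do not anticipate a genuine obstacle here; the argument is essentially a bookkeeping exercise. The only points requiring a little care are the verification, via Remark \ref{rem:equiv}, that the exponent restrictions making the weight integrable coincide with the stated hypotheses, and tracking the power of $\tau$ correctly through the outer $1/p$-th root.
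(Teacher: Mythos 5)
Your argument is correct and is the standard (and surely the intended) proof: the paper itself gives no proof of this lemma, citing it from \cite{K4}, and the H\"older-plus-Beta-function computation you describe, with the hypotheses $\upsilon,\lambda+1>1/r$ translated through Remark \ref{rem:equiv} into the integrability conditions $p(\upsilon-1)+1>0$ and $p\lambda+1>0$, is exactly what is needed. One point worth flagging: your derivation produces the constant $K_{p(\upsilon-1)+1,p\lambda}^{1/p}$, not $K_{p(\upsilon-1)+1,p\lambda}$ as written in the statement of the lemma; since the paper's own applications in \eqref{4.39} and \eqref{4.42} use precisely the $1/p$-th power $K_{p(\alpha-1)+1,p\gamma}^{1/p}$, the omission of the exponent $1/p$ in the displayed formula for $C$ is evidently a typo in the lemma as transcribed, and your constant is the right one.
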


\begin{lemma}[\cite{K4}] \label{lem:res1}
Let $h$ and $z$  be nonnegative continuous functions defined on
$[0,\infty)$. Let $\varphi _i(z)>0$ on $(0,\infty)$, $i=1,2$, and
$\varphi _i(z)$ are continuous nondecreasing functions defined on
$[0,\infty)$. If
\begin{equation}
z(\tau) \leq K_1+K_2(\int_0^{\tau }h^{q}(s) \varphi _1^{q}(z(s)) \varphi
_2^{q}(z(s)) ds)^{1/q},\quad q>1, \; \tau >0,  \label{sr14}
\end{equation}
where $K_i\in \mathbb{R}_{+}$, $i=1,2$, then
\[
z(\tau) \leq \Big[ E^{-1}\Big(E(2^{q-1}K_1)
+2^{q-1}K_2\int_0^{\tau }h^{q}(s) ds\Big) \Big]^{1/q}, \quad
\tau >0,
\]
where $E^{-1}$ is the inverse of
\[
E(\mathfrak{\xi }) =\int_{\mathfrak{\xi }_0}^{\mathfrak{\xi }}
\frac{ds}{\varphi _1^{q}(s^{1/q}) \varphi _2^{q}(
s^{1/q}) },\quad \mathfrak{\xi }>\mathfrak{\xi }_0>0.
\]
\end{lemma}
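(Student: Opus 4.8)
The plan is to reduce the mixed integral inequality \eqref{sr14} to the classical Bihari inequality — the same device that underlies Lemmas \ref{lem:3.1.1} and \ref{lem:4.2.2} — by means of a $q$-th power substitution. Put $u(\tau):=z^{q}(\tau)$, which is again a nonnegative continuous function on $[0,\infty)$, so that $z(\tau)=u(\tau)^{1/q}$. Raising both sides of \eqref{sr14} to the power $q$ and invoking the elementary convexity estimate $(a+b)^{q}\le 2^{q-1}(a^{q}+b^{q})$, valid for $a,b\ge 0$ and $q\ge 1$ (convexity of $t\mapsto t^{q}$), one obtains
\[
u(\tau)\le 2^{q-1}K_1^{q}+2^{q-1}K_2^{q}\int_0^{\tau}h^{q}(s)\,\varphi_1^{q}\big(u(s)^{1/q}\big)\,\varphi_2^{q}\big(u(s)^{1/q}\big)\,ds,\qquad \tau>0,
\]
where $z(s)$ has already been expressed through $u(s)$ inside the integral. (Note that passing through the $q$-th power replaces the constants $K_1,K_2$ of the hypothesis by $K_1^{q},K_2^{q}$, so the constants in the conclusion are to be read accordingly.)

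Next I would set $\psi(w):=\varphi_1^{q}\big(w^{1/q}\big)\varphi_2^{q}\big(w^{1/q}\big)$ for $w\ge 0$. Since $w\mapsto w^{1/q}$ is continuous and nondecreasing on $[0,\infty)$ and each $\varphi_i$ is continuous, positive and nondecreasing on $(0,\infty)$, the function $\psi$ is continuous, positive and nondecreasing on $(0,\infty)$; it is exactly the function in the denominator of the stated $E$, so $E(\xi)=\int_{\xi_0}^{\xi}\frac{ds}{\psi(s)}$ is well defined and strictly increasing, hence invertible on its range. With this notation the displayed estimate becomes
\[
u(\tau)\le 2^{q-1}K_1^{q}+2^{q-1}K_2^{q}\int_0^{\tau}h^{q}(s)\,\psi\big(u(s)\big)\,ds,\qquad \tau>0,
\]
and applying the Bihari inequality (with constant $2^{q-1}K_1^{q}$, weight $2^{q-1}K_2^{q}h^{q}$ and kernel $\psi$) gives
\[
u(\tau)\le E^{-1}\Big(E\big(2^{q-1}K_1^{q}\big)+2^{q-1}K_2^{q}\int_0^{\tau}h^{q}(s)\,ds\Big),\qquad \tau>0.
\]
Taking $q$-th roots and recalling $z=u^{1/q}$ then yields the asserted bound.

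I expect the delicate point to be the legitimate application of the Bihari inequality: one must check that, for each fixed $\tau$, the quantity $E\big(2^{q-1}K_1^{q}\big)+2^{q-1}K_2^{q}\int_0^{\tau}h^{q}(s)\,ds$ lies in the range of $E$, so that $E^{-1}$ may be applied; this is precisely the hypothesis built into Bihari's theorem and is the reason the estimate must be understood on the maximal subinterval where the right-hand side remains finite. A minor additional technicality is the degenerate case $K_1=0$: since $\xi_0>0$, one first runs the argument with $K_1$ replaced by $K_1+\varepsilon>0$ and then lets $\varepsilon\downarrow 0$, using continuity of $E^{-1}$. Everything else — the convexity inequality, the monotonicity and continuity of $\psi$, and the substitution $u=z^{q}$ — is routine.
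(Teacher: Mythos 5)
The paper does not actually prove this lemma---it is quoted from \cite{K4}---and your argument (substitute $u=z^{q}$, apply $(a+b)^{q}\le 2^{q-1}(a^{q}+b^{q})$, then Bihari with the kernel $\psi(w)=\varphi_1^{q}(w^{1/q})\varphi_2^{q}(w^{1/q})$) is exactly the standard derivation used there, so your proposal is correct and takes the same route. You are also right to flag the constants: the computation yields $E(2^{q-1}K_1^{q})$ and $2^{q-1}K_2^{q}$, and the statement as printed (with $K_1$, $K_2$ in place of $K_1^{q}$, $K_2^{q}$) must be a misprint, since for $\varphi_1=\varphi_2\equiv 1$ and $K_2=0$ the printed bound would reduce to $K_1\le(2^{q-1}K_1)^{1/q}$, which fails for large $K_1$.
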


\section{Source without fractional derivatives} \label{sec4}

We consider the asymptotic behavior of solutions of the equation
\begin{equation}
(^C \mathfrak{D}_0^{\alpha }x) '(\tau
) =f(\tau ,x(\tau)),\quad 0<\alpha <1\text{, }\tau
\geq 0,  \label{C1}
\end{equation}
subject to
\begin{equation}
x(0)=b_1 ,\quad
{}^C \mathfrak{D}_0^{\alpha }x(\tau) |_{\tau
=0}=b_2,  \label{C1.5}
\end{equation}
in the space
\begin{equation}
C_{1-\alpha }^{\alpha ,1}[0,\infty)=\{ x\in AC[0,\infty),(^C
\mathfrak{D}_0^{\alpha }x) '\in C_{1-\alpha }[0,\infty)\} .  \label{space of Caputo}
\end{equation}
We assume the following conditions:
\begin{itemize}
\item[(C1)] $f(\tau ,x)\in C[ [0,\infty)\times \mathbb{R},\mathbb{R}] $ is such that
$f(\cdot,x(\cdot))\in C_{1-\alpha }[0,\infty)$ for any $x\in AC[0,\infty)$.

\item[(C2)] There are continuous functions $P,\varphi :[0,\infty)\to \lbrack 0,\infty)$ such that
\begin{equation}
| f(\tau ,x(\tau))| \leq \varphi (
| x(\tau)|) P(\tau),\quad \tau \geq 0,
\label{C2}
\end{equation}
where
\begin{equation}
\int_1^{\infty }s^{\alpha }P(s) ds<\infty ,  \label{C4}
\end{equation}
and $\varphi \in \Phi$.
\end{itemize}

\begin{theorem}\label{thm:4.1.1}
 Suppose $f$ satisfies {\rm (C1), (C2)}  and $x\in AC[0,\infty)$ is a solution
 of \eqref{C1}-\eqref{C1.5}. Then
\[
\lim_{\tau \to \infty }\frac{x(\tau) }{\tau^{\alpha }}
=a\in \mathbb{R},\quad \text{as }\tau \to \infty .
\]
\end{theorem}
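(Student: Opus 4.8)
The plan is to reduce everything to the assertion that $\lim_{\tau\to\infty}{}^C\mathfrak{D}_0^{\alpha}x(\tau)$ exists and is finite, after which Lemma~\ref{lem:4.1.1} finishes the job.

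\emph{First}, I would pass to an equivalent integral equation. Setting $y:={}^C\mathfrak{D}_0^{\alpha}x$, equation \eqref{C1} reads $y'(\tau)=f(\tau,x(\tau))$ with $y(0)=b_2$, so $y(\tau)=b_2+\int_0^{\tau}f(s,x(s))\,ds=b_2+\mathfrak{I}_0^{1}f(\cdot,x(\cdot))(\tau)$. Applying $\mathfrak{I}_0^{\alpha}$ to this identity, using Lemma~\ref{lem:2.4.1} with $n=1$ on the left-hand side (valid since $x\in AC[0,\infty)$) and Lemma~\ref{pro:2.2.1} together with the semigroup property Lemma~\ref{lem:2.2.2} on the right-hand side (valid since $f(\cdot,x(\cdot))\in C_{1-\alpha}[0,\infty)$ by (C1)), I obtain
\[
x(\tau)=b_1+\frac{b_2}{\Gamma(1+\alpha)}\,\tau^{\alpha}+\frac{1}{\Gamma(1+\alpha)}\int_0^{\tau}(\tau-s)^{\alpha}f(s,x(s))\,ds,\qquad \tau>0 .
\]

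\emph{Second}, I would establish the a priori bound $|x(\tau)|=O(\tau^{\alpha})$. Since $(\tau-s)^{\alpha}\le\tau^{\alpha}$ for $0\le s\le\tau$, condition (C2) gives
\[
|x(\tau)|\le |b_1|+\frac{|b_2|}{\Gamma(1+\alpha)}\tau^{\alpha}+\frac{\tau^{\alpha}}{\Gamma(1+\alpha)}\int_0^{\tau}P(s)\,\varphi(|x(s)|)\,ds,\qquad \tau\ge0 ,
\]
which is precisely \eqref{r2.55} with $z=|x|$, $g=P$, $\gamma=\alpha$, $c_1=|b_1|$, $c_2=|b_2|/\Gamma(1+\alpha)$, $c_3=1/\Gamma(1+\alpha)$, and $\varphi\in\Phi$. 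Lemma~\ref{lem:3.1.1} then applies, and since $\int_1^{\infty}s^{\alpha}P(s)\,ds<\infty$ by \eqref{C4}, the conclusion \eqref{r2.66} yields a constant bound for $|x|$ on $[0,1]$ and a bound $|x(\tau)|\le K\tau^{\alpha}$ on $[1,\infty)$ for some $K>0$.

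\emph{Third}, I would upgrade this to $f(\cdot,x(\cdot))\in L_1(0,\infty)$ and conclude. Because $\varphi\in\Phi$, the choice $w=v=t\ge1$ in the defining inequality of $\Phi$ gives $\varphi(t)\le t\,\varphi(1)$, so together with monotonicity $\varphi(|x(s)|)\le\varphi(1)\bigl(1+Ks^{\alpha}\bigr)$ for $s\ge1$, while $\varphi(|x(s)|)$ stays bounded on $[0,1]$. Using (C2), the continuity of $P$ on $[0,1]$, and \eqref{C4} (note $\int_1^{\infty}P(s)\,ds\le\int_1^{\infty}s^{\alpha}P(s)\,ds<\infty$), I get $\int_0^{\infty}|f(s,x(s))|\,ds<\infty$; hence $y(\tau)=b_2+\int_0^{\tau}f(s,x(s))\,ds$ tends to the finite limit $L:=b_2+\int_0^{\infty}f(s,x(s))\,ds$ as $\tau\to\infty$. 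Finally I check the hypotheses of Lemma~\ref{lem:4.1.1} with $\eta=1-\alpha$: $x\in AC[0,\infty)$, and $\mathfrak{I}_0^{1-\alpha}x'={}^C\mathfrak{D}_0^{\alpha}x=y$ is continuous on $[0,\infty)$ (being an integral of an $L_1(0,1)$ function) with derivative $f(\cdot,x(\cdot))\in C_{1-\alpha}[0,\infty)$, so $\mathfrak{I}_0^{1-\alpha}x'\in C_{1-\alpha}^{1}[0,\infty)$. Lemma~\ref{lem:4.1.1} then gives $\lim_{\tau\to\infty}x(\tau)/\tau^{\alpha}=L/\Gamma(1+\alpha)=:a\in\mathbb{R}$.

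The main obstacle is the middle part: one must convert the Volterra-type inequality for $|x|$ into a clean $O(\tau^{\alpha})$ estimate and then exploit both the subhomogeneity built into the class $\Phi$ and the weighted integrability \eqref{C4} to promote it to genuine $L_1$ control of the nonlinearity. By comparison, the manipulations with the composition rules for the fractional operators and the closing application of Lemma~\ref{lem:4.1.1} are routine. A small point to watch is the admissibility of $E^{-1}$ in Lemma~\ref{lem:3.1.1}, namely that the finite ``integral budget'' furnished by \eqref{C4} stays inside the range of $E$.
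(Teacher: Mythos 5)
Your proposal is correct and follows essentially the same route as the paper: pass to the integral equation, apply the Bihari-type Lemma~\ref{lem:3.1.1} to get the $O(\tau^{\alpha})$ bound, use the subhomogeneity of $\varphi\in\Phi$ together with \eqref{C4} to deduce $f(\cdot,x(\cdot))\in L_1(0,\infty)$, and close with Lemma~\ref{lem:4.1.1}. The only (immaterial) difference is that the paper exploits $\varphi(w)\le v\,\varphi(w/v)$ with $v=s^{\alpha}$ and the bound $\varphi(|x(s)|/s^{\alpha})\le\varphi(C_2)$, whereas you use the equivalent linear-growth consequence $\varphi(t)\le t\,\varphi(1)$; your verification of the hypotheses of Lemma~\ref{lem:4.1.1} is in fact slightly more careful than the paper's.
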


\begin{proof}
Integrating both sides of \eqref{C1}, we find
\begin{equation}
^C \mathfrak{D}_0^{\alpha }x(\tau) =b_2+\int_0^{\tau
}f(s,x(s))ds=b_2+\mathfrak{I}_0^{1}f(\tau ,x(\tau
)).  \label{cc}
\end{equation}
Applying $\mathfrak{I}_0^{\alpha }$ to both sides of \eqref{cc}, and using
Lemmas \ref{lem:2.4.1} and  \ref{pro:2.2.1}, we arrive at
\begin{equation}
x(\tau) =b_1+\frac{b_2}{\Gamma (\alpha +1)}\tau^{\alpha }+
\frac{1}{\Gamma (\alpha +1)}\int_0^{\tau }(\tau -s)^{\alpha
}f(s,x(s)) ds,\quad \tau \geq 0.  \label{C6.6}
\end{equation}
By using \eqref{C2} we obtain
\begin{equation}
| x(\tau) | \leq | b_1|
+\frac{| b_2| }{\Gamma (\alpha +1)}\tau^{\alpha }+
\frac{1}{\Gamma (\alpha +1) }\tau^{\alpha }\int_0^{\tau
}P(s)\varphi (| x(s)|) ds,\quad \tau \geq
0.  \label{C8}
\end{equation}
Applying Lemma \ref{lem:3.1.1} to \eqref{C8} we obtain
\[
| x(\tau) | \leq \begin{cases}
E^{-1}\big(E(| b_1| +\frac{|
b_2| }{\Gamma (\alpha +1)}) +\frac{1}{\Gamma (\alpha
+1) }\int_0^{\tau }P(s) ds\big) ,& 0\leq \tau <1
\\
\tau^{\alpha }E^{-1}\big(E(A) +\frac{1}{\Gamma (\alpha
+1) }\int_1^{\tau }s^{\alpha }P(s) ds\big) , &\tau \geq 1,
\end{cases}
\]
where
\begin{gather*}
A=| b_1| +\frac{| b_2| }{\Gamma
(\alpha +1)}+\frac{1}{\Gamma (\alpha +1) }\varphi (
E^{-1}(K)) \int_0^{1}P(s)ds, \\
K=E(| b_1| +\frac{| b_2| }{
\Gamma (\alpha +1)}) +\frac{1}{\Gamma (\alpha +1) }
\int_0^{1}P(s)ds<\infty .
\end{gather*}
From \eqref{C4} and the continuity of $P$ on $\mathbb{R}_{+}$, we see that
\begin{equation}
| x(\tau) | \leq \begin{cases}
C_1, & 0\leq \tau <1, \\
\tau^{\alpha }C_2, & \tau \geq 1,
\end{cases}  \label{C20}
\end{equation}
with
\begin{gather*}
C_1=E^{-1}\Big(E\big(| b_1| +\frac{|
b_2| }{\Gamma (\alpha +1)}\big) +\frac{1}{\Gamma (\alpha
+1) }\int_0^{1}P(s) ds\Big) <\infty , \\
C_2=E^{-1}\Big(E(A) +\frac{1}{\Gamma (\alpha +1)
}\int_1^{\infty }s^{\alpha }P(s) ds\Big) <\infty .
\end{gather*}
Next, it is clear that
\begin{equation}
\begin{aligned}
\int_0^{\tau }| f(s,x(s)) |ds
&\leq \int_0^{\tau }P(s)\varphi (| x(s)|) ds \\
&\leq \int_0^{1}P(s)\varphi (| x(s)|)ds+\int_1^{\tau }P(s)\varphi (| x(s)|) ds \\
&\leq \int_0^{1}P(s)\varphi (| x(s)|) ds
 +\int_1^{\tau }s^{\alpha }P(s)\varphi \big(\frac{|
x(s)| }{s^{\alpha }}\big) ds,\quad \tau >0.
\end{aligned} \label{C19}
\end{equation}
By \eqref{C20} and \eqref{C19}, we have
\begin{equation}
\lim_{\tau \to \infty }\int_0^{\tau }f(s,x(s)) ds<\infty .  \label{C21.5}
\end{equation}
On the other hand, integrating  \eqref{C1} yields
\begin{equation}
^C \mathfrak{D}_0^{\alpha }x(\tau) =b_2+\int_0^{\tau
}f(s,x(s)) ds,\quad \tau >0.  \label{C22}
\end{equation}
From \eqref{C21.5} and \eqref{C22}, we conclude
\[
\lim_{\tau \to \infty }{}^C \mathfrak{D}_0^{\alpha }x(\tau) =c,\quad c\in \mathbb{R}.
\]
Further, by Lemma \ref{lem:4.1.1}, we can write
\[
\lim_{\tau \to \infty }\frac{x(\tau) }{\tau^{\alpha }}
=\lim_{\tau \to \infty }\frac{^C \mathfrak{D}_0^{\alpha
}x(\tau) }{\Gamma (\alpha +1) }=a,
\]
for some real number $a$.
\end{proof}

\begin{example}\label{exm:4.1.1} \rm
All solutions of
\begin{equation}
(^C \mathfrak{D}_0^{\alpha }x) '(\tau
) =e^{-\tau }x^{r}(\tau) ,\quad 0<\alpha ,r\leq 1,\; \tau >0.  \label{C21}
\end{equation}
satisfy $\lim_{\tau \to \infty }\frac{x(\tau) }{\tau^{\alpha }}=a$, as
$\tau \to \infty $, for some real number $a$.
\end{example}

To prove this claim, let $\varphi (\tau) =\tau^{r}$ and $P(\tau) =e^{-\tau }$. Then
\[
\int_1^{\infty }s^{\alpha }P(s) ds\leq \int_0^{\infty
}s^{\alpha }e^{-s}ds=\Gamma (\alpha +1) <\infty .
\]
Obviously $\varphi $ is a nondecreasing and positive function with
\[
u\varphi (v) =uv^{r}\leq (vu)^{r}=\varphi (vu) ,\quad u\geq 1,\; v>0,
\]
and
\[
\int_0^{\infty }\frac{ds}{\varphi (s) }=\int_0^{\infty }\frac{ds}{s^{r}}=\infty .
\]
Then $\varphi \in \Phi$. All the conditions of Theorem \ref{thm:4.1.1}
are satisfied, therefore every solution $x$ of \eqref{C21} has satisfy
 $\lim_{\tau \to \infty }\frac{x(\tau) }{\tau^{\alpha }}=a$,
, $a\in \mathbb{R}$, as $\tau \to \infty $.

\section{Equations with fractional source terms\label{sec5}}

We study  problem \eqref{cd1} in the space $C_{1-\alpha }^{\alpha,1}[0,\infty)$
defined in \eqref{space of Caputo} with the following assumptions:
\begin{itemize}
\item[(C3)] $f(\tau ,v,w):[0,\infty)\times\mathbb{R}^{2}\to\mathbb{R}$
is so that $f(\cdot,v(\cdot) ,w(\cdot))\in C_{1-\alpha}[0,\infty)$ for every
$v,w\in AC[0,\infty)$.

\item[(C4)]
\begin{equation}
| f(\tau ,u(\tau) ,v(\tau))|
\leq F_1(\tau ,| u(\tau) |)
+F_2(\tau ,\tau^{\beta }| v(\tau) |) ,\quad \tau \geq 0,  \label{cd2}
\end{equation}
where $F_i\in M$, $i=1,2$.
\end{itemize}

\begin{lemma} \label{lem:4.2.3}
 Suppose that $f$ satisfies {\rm (C3), (4)}
and $x\in AC[0,\infty)$ is a solution of \eqref{cd1}. Then
\begin{equation}
\max \big\{ | x(\tau) | ,\tau^{\beta}|^C \mathfrak{D}_0^{\beta }x(\tau) |\big\}
\leq | b_1| +z(\tau) , \quad \tau >0,  \label{cd32}
\end{equation}
where
\begin{equation} \label{cd33}
\begin{gathered}
z(\tau) =C_2\tau^{\alpha }+C_3\tau^{\alpha
}\int_0^{\tau }[ F_1(s,| x(s)|)
+F_2(s,\tau^{\beta }|^C \mathfrak{D}_0^{\beta
}x(s) |) ] ds,\quad \tau >0,  \\
C_3=\max \big\{ \frac{1}{\Gamma (\alpha +1) },\frac{1}{\Gamma
(\alpha -\beta +1) }\big\} ,\quad
C_2=|b_2| C_3.
\end{gathered}
\end{equation}
\end{lemma}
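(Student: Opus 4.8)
The plan is to rewrite the differential problem \eqref{cd1} as two integral identities---one for $x(\tau)$ and one for ${}^C\mathfrak{D}_0^{\beta}x(\tau)$---and then to dominate the modulus of each right-hand side by the single quantity $z(\tau)$.

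First I would integrate \eqref{cd1} over $[0,\tau]$; since a solution lies in the space $C_{1-\alpha}^{\alpha,1}[0,\infty)$, the function $({}^C\mathfrak{D}_0^{\alpha}x)'$ belongs to $C_{1-\alpha}[0,\infty)$ and is therefore integrable on $[0,\tau]$, so
\[
{}^C\mathfrak{D}_0^{\alpha}x(\tau)=b_2+\mathfrak{I}_0^{1}f(\tau,x(\tau),{}^C\mathfrak{D}_0^{\beta}x(\tau)),\quad \tau>0,
\]
exactly as in \eqref{cc}. Applying $\mathfrak{I}_0^{\alpha}$ to this identity and using Lemma \ref{lem:2.4.1} with $n=1$, Lemma \ref{pro:2.2.1} on the constant term, and the semigroup property of Lemma \ref{lem:2.2.2} (legitimate because $x\in AC[0,\infty)\subset C_{1-\alpha}[0,\infty)$ and $f(\cdot,x(\cdot),{}^C\mathfrak{D}_0^{\beta}x(\cdot))\in C_{1-\alpha}[0,\infty)$ by (C3)), I obtain the analogue of \eqref{C6.6},
\[
x(\tau)=b_1+\frac{b_2}{\Gamma(\alpha+1)}\tau^{\alpha}+\frac{1}{\Gamma(\alpha+1)}\int_0^{\tau}(\tau-s)^{\alpha}f(s,x(s),{}^C\mathfrak{D}_0^{\beta}x(s))\,ds.
\]
For the $\beta$-derivative I would instead apply the operator $\mathfrak{I}_0^{\alpha-\beta}$ to the displayed identity for ${}^C\mathfrak{D}_0^{\alpha}x$ and invoke Lemma \ref{lem:4.2.1} (valid since $x\in AC[0,\infty)$ and $0<\beta<\alpha<1$) together with Lemmas \ref{pro:2.2.1} and \ref{lem:2.2.2}, which yields
\[
{}^C\mathfrak{D}_0^{\beta}x(\tau)=\frac{b_2}{\Gamma(\alpha-\beta+1)}\tau^{\alpha-\beta}+\frac{1}{\Gamma(\alpha-\beta+1)}\int_0^{\tau}(\tau-s)^{\alpha-\beta}f(s,x(s),{}^C\mathfrak{D}_0^{\beta}x(s))\,ds.
\]

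Next I would take absolute values in both identities and use the elementary bounds $(\tau-s)^{\alpha}\le\tau^{\alpha}$ and $\tau^{\beta}(\tau-s)^{\alpha-\beta}\le\tau^{\alpha}$ valid for $0\le s\le\tau$ (here $0<\alpha<1$ and $0<\alpha-\beta<1$), which pull the kernels out as a factor $\tau^{\alpha}$. Then I apply (C4), $|f(s,x(s),{}^C\mathfrak{D}_0^{\beta}x(s))|\le F_1(s,|x(s)|)+F_2(s,s^{\beta}|{}^C\mathfrak{D}_0^{\beta}x(s)|)$, and use that each $F_i\in M$ is nondecreasing in its second argument to replace $s^{\beta}$ by $\tau^{\beta}$ inside the integral (permissible since $s\le\tau$). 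With $C_3=\max\{1/\Gamma(\alpha+1),1/\Gamma(\alpha-\beta+1)\}$ and $C_2=|b_2|C_3$, both resulting right-hand sides are dominated by $z(\tau)$, so $|x(\tau)|\le|b_1|+z(\tau)$ and $\tau^{\beta}|{}^C\mathfrak{D}_0^{\beta}x(\tau)|\le z(\tau)\le|b_1|+z(\tau)$; taking the maximum gives \eqref{cd32}.

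The argument is essentially bookkeeping and I do not anticipate a genuine obstacle. The only points requiring care are: justifying the first integration of \eqref{cd1}, which is exactly where the choice of the solution space $C_{1-\alpha}^{\alpha,1}[0,\infty)$ enters; verifying the hypotheses of the composition Lemmas \ref{lem:2.2.2}, \ref{lem:2.4.1} and \ref{lem:4.2.1} against (C3); and arranging that the two distinct kernels $(\tau-s)^{\alpha}$ and $\tau^{\beta}(\tau-s)^{\alpha-\beta}$ are both controlled by the same power $\tau^{\alpha}$, so that a single constant $C_3$ serves for both estimates.
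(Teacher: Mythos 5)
Your proposal is correct and follows essentially the same route as the paper: integrate \eqref{cd1} once to get the identity for ${}^C\mathfrak{D}_0^{\alpha}x$, apply $\mathfrak{I}_0^{\alpha}$ and $\mathfrak{I}_0^{\alpha-\beta}$ respectively (via Lemmas \ref{lem:2.4.1}, \ref{lem:4.2.1}, \ref{lem:2.2.2}, \ref{pro:2.2.1}), bound the kernels by $\tau^{\alpha}$, and invoke (C4). The only cosmetic difference is that the paper's proof keeps $s^{\beta}$ inside $F_2$ (the $\tau^{\beta}$ in \eqref{cd33} appears to be a typo), whereas you explicitly pass to $\tau^{\beta}$ using the monotonicity built into the class $M$; either reading yields \eqref{cd32}.
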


\begin{proof}
Applying $\mathfrak{I}_0^{1}$ to \eqref{cd1}, we obtain
\begin{equation}
\begin{aligned}
^C \mathfrak{D}_0^{\alpha }x(\tau)
&= b_2+\mathfrak{I}_0^{1}f\big(\tau ,x(\tau) ,{}^C \mathfrak{D}
_0^{\beta }x(\tau)\big)    \\
&= b_2+\int_0^{\tau }f\big(s,x(s) ,{}^C \mathfrak{
D}_0^{\beta }x(s)\big) ds,\quad \tau >0.
\end{aligned}  \label{cd6}
\end{equation}
Next, we apply $\mathfrak{I}_0^{\alpha }$ to both sides of \eqref{cd6},
using Lemmas \ref{lem:2.2.2}, \ref{lem:2.4.1} and \ref{pro:2.2.1}, we
find
\begin{equation}
\begin{aligned}
x(\tau) &=b_1+\frac{b_2}{\Gamma (\alpha +1)}\tau^{\alpha }+
\mathfrak{I}_0^{1+\alpha }f\big(\tau ,x(\tau) ,{}^C \mathfrak{D}_0^{\beta }x(\tau)\big) \\
&=b_1+\frac{b_2}{\Gamma (\alpha +1)}\tau^{\alpha }+\frac{1}{\Gamma
(\alpha +1) }\int_0^{\tau }(\tau -s)^{\alpha
}f(s,x(s) ,{}^C \mathfrak{D}_0^{\beta }x(s)) ds,
\end{aligned}  \label{cd7}
\end{equation}
for $\tau >0$.
Thus, from \eqref{cd7} and \eqref{cd2} we have
\begin{equation}
\begin{aligned}
| x(\tau) |
& \leq | b_1| +\frac{| b_2| }{\Gamma (\alpha +1)}\tau^{\alpha }
 +\frac{\tau^{\alpha }}{\Gamma (\alpha +1) }\int_0^{\tau}\big| f\big(s,x(s) ,{}^C \mathfrak{D}
_0^{\beta }x(s)\big) \big| ds \\
&\leq | b_1| +C_2\tau^{\alpha }+C_3\tau^{\alpha
}\int_0^{\tau }\Big(F_1(s,| x(s)|)
+F_2\big(s,s^{\beta }|^C \mathfrak{D}_0^{\beta }x(
s) |\big)\Big) ds,  \label{cd9}
\end{aligned}
\end{equation}
for $\tau >0$.
By Lemma \ref{lem:4.2.1}, we see that
\begin{equation}
^C \mathfrak{D}_0^{\beta }x(\tau)
=\mathfrak{I}_0^{\alpha -\beta }(^C \mathfrak{D}_0^{\alpha }x(\tau)) .  \label{cd9.5}
\end{equation}
Let us insert the expression \eqref{cd6} into \eqref{cd9.5},
using Lemmas \ref{lem:2.2.2} and \ref{pro:2.2.1}, we have
\begin{align*}
^C \mathfrak{D}_0^{\beta }x(\tau)
&=\mathfrak{I}_0^{\alpha -\beta }\Big(b_2+\mathfrak{I}_0^{1}f\big(s,x(s) ,
{}^C \mathfrak{D}_0^{\beta }x(s)\big) \Big) (\tau) \\
&=\frac{b_2}{\Gamma (\alpha -\beta +1) }\tau^{\alpha -\beta }
 +\mathfrak{I}_0^{\alpha -\beta +1}f\big(\tau ,x(\tau) ,{}^C \mathfrak{D}_0^{\beta }x(\tau)\big)\\
&=\frac{b_2}{\Gamma (\alpha -\beta +1) }\tau^{\alpha -\beta } \\
&\quad +\frac{1}{\Gamma (\alpha -\beta +1) }\int_0^{\tau }(\tau-s)^{\alpha -\beta }f(s,x(s) ,{}^C
\mathfrak{D}_{0^{+}}^{\beta }x(s)) ds,\quad \tau >0.
\end{align*}
Then from this and  \eqref{cd2} we obtain the bound
\begin{equation}
\begin{aligned}
&\tau^{\beta }|^C \mathfrak{D}_0^{\beta }x(\tau)| \\
&\leq C_3| b_2| \tau^{\alpha }+C_3\tau
^{\alpha }\int_0^{\tau }| f(s,x(s) ,{}^C \mathfrak{D}_{0^{+}}^{\beta }x(s)) | ds\\
&\leq C_2\tau^{\alpha }+C_3\tau^{\alpha }\int_0^{\tau }\big(
F_1(s,| x(s)|) +F_2(s,s^{\beta}|^C \mathfrak{D}_0^{\beta }x(s) |)\big) ds,\quad \tau >0.
\end{aligned} \label{cd12}
\end{equation}
Relation \eqref{cd32} follows directly from \eqref{cd33}, \eqref{cd9}
and \eqref{cd12}.
\end{proof}

\begin{theorem} \label{thm:4.2.1}
 Suppose that $f$ satisfies (\textbf{C3)}-(\textbf{C4)} and
\begin{equation}
\int_0^{\infty }s^{\alpha }N_i(s) ds<\infty ,\quad
\int_0^{\infty }F_i(s,| b_1|)
ds<\infty \text{, }i=1,2.  \label{cd4}
\end{equation}
Then, every solution $x(\tau) $ of problem \eqref{cd1} has the
following property
\[
\lim_{\tau \to \infty }\frac{x(\tau) }{\tau^{\alpha }}
=a,\quad a\in \mathbb{R}.
\]
\end{theorem}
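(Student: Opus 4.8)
The plan is to derive from Lemma \ref{lem:4.2.3} an a priori bound of the form $|x(\tau)|\le|b_1|+L\tau^{\alpha}$ by means of the Bihari-type Lemma \ref{lem:4.2.2}, to use this bound to show that the source term is integrable on $(0,\infty)$, and then to pass from the convergence of ${}^{C}\mathfrak{D}_0^{\alpha}x$ to the convergence of $x(\tau)/\tau^{\alpha}$ via the fractional L'Hopital rule of Lemma \ref{lem:4.1.1}.

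First, set $w(\tau)=\max\{|x(\tau)|,\tau^{\beta}|{}^{C}\mathfrak{D}_0^{\beta}x(\tau)|\}$, so that $w(\tau)\le|b_1|+z(\tau)$ by \eqref{cd32}. Since each $F_{i}\in M$ is nondecreasing in its second argument, and the arguments $|x(s)|$ and $s^{\beta}|{}^{C}\mathfrak{D}_0^{\beta}x(s)|$ occurring in the function $z$ of Lemma \ref{lem:4.2.3} are both $\le w(s)\le|b_1|+z(s)$, we get
\[
z(\tau)\le C_2\tau^{\alpha}+C_3\tau^{\alpha}\int_0^{\tau}\big[F_1(s,|b_1|+z(s))+F_2(s,|b_1|+z(s))\big]\,ds,\qquad \tau>0,
\]
which is exactly inequality \eqref{cd25} with $\gamma=\alpha$, $c_1=C_2$, $c_2=C_3$, $c_3=c_4=|b_1|$ and $h\equiv0$ (when $b_1=0$ or $b_2=0$ one replaces the vanishing constants by arbitrarily small positive ones, which only enlarges the right-hand side). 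Lemma \ref{lem:4.2.2} then yields $z(\tau)\le\tau^{\alpha}f(\tau)$ with $f$ as in \eqref{cd27}. By the first condition in \eqref{cd4} the exponential factor $\exp\big(C_3\int_0^{\tau}s^{\alpha}[N_1(s)+N_2(s)]\,ds\big)$ stays bounded as $\tau\to\infty$, and by the second condition so does $\int_0^{\tau}[F_1(s,|b_1|)+F_2(s,|b_1|)]\,ds$; hence $f(\tau)\le L<\infty$ for all $\tau>0$, and therefore $w(\tau)\le|b_1|+L\tau^{\alpha}$, in particular $|x(\tau)|\le|b_1|+L\tau^{\alpha}$.

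Next I would show $f(\cdot,x(\cdot),{}^{C}\mathfrak{D}_0^{\beta}x(\cdot))\in L_1(0,\infty)$. Combining \eqref{cd2}, the monotonicity of $F_{i}$ in the second variable, the bound $w(s)\le|b_1|+Ls^{\alpha}$, and the defining inequality of the class $M$ applied at the points $|b_1|$ and $|b_1|+Ls^{\alpha}\ge|b_1|$, one obtains
\[
\big|f\big(s,x(s),{}^{C}\mathfrak{D}_0^{\beta}x(s)\big)\big|\le F_1(s,|b_1|)+F_2(s,|b_1|)+Ls^{\alpha}\big(N_1(s)+N_2(s)\big),\qquad s>0,
\]
and the right-hand side is integrable on $(0,\infty)$ by \eqref{cd4}. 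Integrating \eqref{cd1} as in \eqref{cd6}, so that ${}^{C}\mathfrak{D}_0^{\alpha}x(\tau)=b_2+\int_0^{\tau}f(s,x(s),{}^{C}\mathfrak{D}_0^{\beta}x(s))\,ds$, it follows that ${}^{C}\mathfrak{D}_0^{\alpha}x(\tau)$ tends to a finite limit $c$ as $\tau\to\infty$. Since the solution belongs to $C_{1-\alpha}^{\alpha,1}[0,\infty)$, we have $x\in AC[0,\infty)$ and $\mathfrak{I}_0^{1-\alpha}x'={}^{C}\mathfrak{D}_0^{\alpha}x\in C_{1-\alpha}^{1}[0,\infty)$, so Lemma \ref{lem:4.1.1} applies and gives $\lim_{\tau\to\infty}x(\tau)/\tau^{\alpha}=c/\Gamma(1+\alpha)=:a\in\mathbb{R}$.

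The step I expect to be the main obstacle is the first one: Lemma \ref{lem:4.2.3} bounds $w$ only in terms of $z$, which in turn contains $w$, so the estimate is self-referential, and it is the Bihari-type Lemma \ref{lem:4.2.2} together with the integrability conditions \eqref{cd4} that breaks this circularity and produces the genuine growth bound $|x(\tau)|\le|b_1|+L\tau^{\alpha}$ on which everything afterwards rests. Once that bound is available, both the integrability of the source term and the application of the fractional L'Hopital rule are routine.
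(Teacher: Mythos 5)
Your proposal is correct and follows essentially the same route as the paper: Lemma \ref{lem:4.2.3} to set up the self-referential estimate, Lemma \ref{lem:4.2.2} with the integrability conditions \eqref{cd4} to obtain $z(\tau)\le C\tau^{\alpha}$, the $M$-property to deduce integrability of the source term, and Lemma \ref{lem:4.1.1} to conclude. Your remarks on handling the case $b_1=0$ or $b_2=0$ (since Lemma \ref{lem:4.2.2} requires $c_i>0$) and on verifying the hypotheses of Lemma \ref{lem:4.1.1} are in fact slightly more careful than the published argument.
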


\begin{proof}
By using Lemma \ref{lem:4.2.3} we have
\begin{gather}
F_1(\tau ,| x(\tau)|) \leq F_1(
\tau ,| b_1| +z(\tau)) ,\quad \tau >0,  \label{cd34} \\
F_2\big(\tau ,\tau^{\beta }|^C \mathfrak{D}_0^{\beta }x(\tau) |\big)
\leq F_2(\tau ,z(\tau) +| b_1|) ,\quad \tau >0. \label{cd35}
\end{gather}
Taking into account \eqref{cd33}, \eqref{cd34} and \eqref{cd35} we arrive at
\[
z(\tau) \leq C_2\tau^{\alpha }+C_3\tau^{\alpha
}\int_0^{\tau }[ F_1(s,| b_1| +z(s)) +F_2(s,z(\tau) +|
b_1|) ] ds,\quad \tau >0.
\]
Then, by Lemma \ref{lem:4.2.2}, we find that
\begin{equation}
z(\tau) \leq C\tau^{\alpha },\quad \tau >0  \label{cd36}
\end{equation}
where
\begin{align*}
C&=\Big(C_2+C_3\int_0^{\infty }[ F_1(s,|b_1|) +F_2(s,| b_1|)] ds\Big)\\
&\quad \times \exp (C_3\int_0^{\infty}s^{\gamma }[ N_1(s) +N_2(s) ]ds) <\infty .
\end{align*}
It follows from Lemma \ref{lem:4.2.3} and \eqref{cd36} that
\begin{equation}
| x(\tau) | \leq | b_1|+C\tau^{\alpha },\quad
\tau^{\beta }|^C \mathfrak{D}_0^{\beta }x(\tau) |
\leq |b_1| +C\tau^{\alpha },\quad \tau >0.  \label{cd37}
\end{equation}
Again by  hypothesis \eqref{cd2} we have
\begin{align*}
| \int_0^{\tau }f(s,x(s) ,{}^C\mathfrak{D}_0^{\beta }x(s)) ds|
&\leq \int_0^{\tau }| f(s,x(s) ,{}^C \mathfrak{D}_0^{\beta }x(s)) | ds\\
&\leq \int_0^{\tau }[ F_1(s,| x(s)|) +F_2(s,s^{\beta }|^C \mathfrak{D}_0^{\beta }x(
s) |) ] ds,
\end{align*}
for $\tau >0$.
From this inequality and \eqref{cd37}, we obtain
\begin{align*}
&| \int_0^{\tau }f(s,x(s) ,{}^C \mathfrak{D}_0^{\beta }x(s)) ds| \\
&\leq \int_0^{\tau }[ F_1(s,| b_1| +Cs^{\alpha}) +F_2(s,| b_1| +Cs^{\alpha })] ds \\
&=\int_0^{\tau }\Big\{ F_1(s,| b_1| +Cs^{\alpha }) -F_1(s,| b_1|)+F_1(s,| b_1|) \\
&\quad  +F_2(s,|b_1| +Cs^{\alpha }) -F_2(s,|b_1|) +F_2(s,| b_1|)\Big\} ds,\quad \tau >0.
\end{align*}
As the functions $F_i$, $i=1,2$ are in $M$, we obtain
\begin{equation}
\begin{aligned}
&\big| \int_0^{\tau }f(s,x(s) ,{}^C
\mathfrak{D}_0^{\beta }x(s)) ds\big| \\
&\leq C\int_0^{\tau }s^{\alpha }[ N_1(s) +N_{s}(
s) ] ds+\int_0^{\tau }[ F_1(s,|
b_1|) +F_2(s,| b_1|)] ds<\infty ,
\end{aligned}  \label{cd23}
\end{equation}
where we have used \eqref{cd4}. Then
\[
\lim_{\tau \to \infty }\int_0^{\tau }f(s,x(s) ,{}^C \mathfrak{D}_0^{\beta }x(s)) ds<\infty .
\]
By \eqref{cd6} we conclude that there is $b\in \mathbb{R}$ such that
$\lim_{\tau \to \infty }{}^C \mathfrak{D}_0^{\alpha}x(\tau) =b$.
Further, by Lemma \ref{lem:4.1.1}, we deduce that
\[
\lim_{\tau \to \infty }\frac{x(\tau) }{\tau^{\alpha }}
=\lim_{\tau \to \infty }\frac{^C \mathfrak{D}_0^{\alpha
}x(\tau) }{\Gamma (\alpha +1) }=a,
\]
and the proof is now complete.
\end{proof}

\section{Boundedness} \label{sec6}

We consider the fractional differential problem \eqref{sc1} in the space
\begin{equation}
C^{\alpha }[0,\infty)=\left\{ x\in AC[0,\infty):{}^C \mathfrak{D}
_0^{\alpha }x\in C[0,\infty)\right\} .  \label{space decay C}
\end{equation}
We assume the following conditions:
\begin{itemize}
\item[(C5)] $f:[0,\infty)\times\mathbb{R}^{2}\to\mathbb{R}$ is so that
$f(\cdot,v(\cdot) ,w(\cdot)) \in C[0,\infty)$ for every $v$, $w$ in $C[0,\infty)$.

\item[(C6)]
\begin{equation}
| f(\tau ,u,v) | \leq \tau^{\gamma }h(
\tau) \varphi _1(| u(\tau) |) \varphi _2(| v(\tau) |) ,\quad \tau >0,  \label{sc2}
\end{equation}
where $h$, $\varphi _1$, $\varphi _2:\mathbb{R}_{+}\to \mathbb{R}_{+}$
are continuous functions with $\varphi _i$, $i=1,2$, are
nondecreasing functions and $h\in L_{q}(0,\infty) $ for some
$q>\frac{1}{\alpha -\beta }$, $\gamma =\frac{1}{q}-\alpha $.
\end{itemize}

\begin{lemma} \label{lem:esc}: Suppose that $f$ satisfies {(C5), (C6)}
and $x\in AC[0,\infty)$ is a solution of \eqref{sc1}.
Then
\begin{equation}
\max \big\{ | x(\tau) | ,|^C
\mathfrak{D}_0^{\beta }x(\tau) | \big\} \leq
z(\tau) ,\quad \tau \geq \tau _0>0,  \label{4.35}
\end{equation}
where
\begin{equation}
z(\tau) =| b| +K_1\Big(\int_0^{\tau
}h^{q}(s) \varphi _1^{q}(| x(s)
|) \varphi _2^{q}(|^C \mathfrak{D}
_0^{\beta }x(s) |) ds\Big)^{1/q},
\quad \tau >0,  \label{4.36}
\end{equation}
and
\begin{gather*}
K_1=\max \big\{\frac{K_{1+p(\alpha -1) ,p\gamma }^{1/p}}{
\Gamma (\alpha) },\frac{K_{1+p(\alpha -\beta -1)
,p\gamma }^{1/p}}{\Gamma (\alpha -\beta) \tau _0^{\beta }}\big\} ,\\
 K_{\alpha ,\beta }=\frac{\Gamma (\beta +1)
\Gamma (\alpha) }{\Gamma (\alpha +\beta +1) },\quad
\frac{1}{p}+\frac{1}{q}=1.
\end{gather*}
\end{lemma}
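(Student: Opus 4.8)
The plan is to mimic the structure of the proof of Lemma \ref{lem:4.2.3}, but now tracking $|x(\tau)|$ and $|{}^C\mathfrak{D}_0^\beta x(\tau)|$ directly (no weights $\tau^\beta$, since here $0\le\beta<\alpha<1$ and we aim only at boundedness via an $L_q$-condition), and replacing the crude $\int_0^\tau(\tau-s)^{\alpha-1}\cdots\,ds$ estimates by the sharper H\"older-type bound of Lemma \ref{lem:rec2}. First I would convert the differential problem \eqref{sc1} into an integral equation: applying $\mathfrak{I}_0^\alpha$ to \eqref{sc1} and using Lemma \ref{lem:2.4.1} (with $n=1$) gives
\[
x(\tau)=b+\frac{1}{\Gamma(\alpha)}\int_0^\tau(\tau-s)^{\alpha-1}f\big(s,x(s),{}^C\mathfrak{D}_0^\beta x(s)\big)\,ds,\quad\tau>0.
\]
Next, by Lemma \ref{lem:4.2.1} we have ${}^C\mathfrak{D}_0^\beta x=\mathfrak{I}_0^{\alpha-\beta}\,{}^C\mathfrak{D}_0^\alpha x$, and since ${}^C\mathfrak{D}_0^\alpha x=\mathfrak{I}_0^{1-\alpha}x'$ one also has ${}^C\mathfrak{D}_0^\beta x=\mathfrak{I}_0^{1-\beta}x'$; I would write
\[
{}^C\mathfrak{D}_0^\beta x(\tau)=\frac{1}{\Gamma(\alpha-\beta)}\int_0^\tau(\tau-s)^{\alpha-\beta-1}f\big(s,x(s),{}^C\mathfrak{D}_0^\beta x(s)\big)\,ds,
\]
obtained by applying $\mathfrak{I}_0^{\alpha-\beta}$ to \eqref{sc1} and using Lemma \ref{lem:2.4.1} together with the fact that ${}^C\mathfrak{D}_0^\alpha x(0)=b_\star$ is absorbed; one must be slightly careful here, but since $0\le\beta$ and we evaluate at $\tau\ge\tau_0>0$, the boundary term poses no singularity issue.

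Then I would insert the growth hypothesis (C6), $|f(s,u,v)|\le s^\gamma h(s)\varphi_1(|u|)\varphi_2(|v|)$, into both integral representations, giving
\[
|x(\tau)|\le|b|+\frac{1}{\Gamma(\alpha)}\int_0^\tau(\tau-s)^{\alpha-1}s^\gamma h(s)\varphi_1(|x(s)|)\varphi_2(|{}^C\mathfrak{D}_0^\beta x(s)|)\,ds
\]
and similarly for $|{}^C\mathfrak{D}_0^\beta x(\tau)|$ with exponent $\alpha-\beta-1$ on $(\tau-s)$ and $\Gamma(\alpha-\beta)$ in the denominator; in the latter, for $\tau\ge\tau_0$ I would bound a factor $\tau^{-\beta}\le\tau_0^{-\beta}$ if it appears, which explains the $\tau_0^\beta$ in $K_1$. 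Now I apply Lemma \ref{lem:rec2} to each of the two integrals: with $\upsilon=\alpha$, $\lambda=\gamma$, $r=q$ (so $p$ is the conjugate exponent) — the hypotheses $\upsilon>1/r$ and $\lambda+1>1/r$ translate to $\alpha>1/q$ (guaranteed since $q>1/(\alpha-\beta)\ge1/\alpha$) and $\gamma+1=1/q-\alpha+1>1/q$, i.e. $\alpha<1$, which holds — and likewise with $\upsilon=\alpha-\beta$ for the second integral, where $\alpha-\beta>1/q$ is exactly the standing assumption on $q$. Pulling $\varphi_1\varphi_2$ out is legitimate because, by monotonicity, one first replaces the integrand's $\varphi$-factors by sup-type bounds; more cleanly, one keeps them inside and applies Lemma \ref{lem:rec2} with $g(s)=h^q(s)\varphi_1^q(|x(s)|)\varphi_2^q(|{}^C\mathfrak{D}_0^\beta x(s)|)$ after H\"older — that is, one uses the raw $\int_0^\tau(\tau-s)^{\upsilon-1}s^\lambda\big(h\varphi_1\varphi_2\big)(s)\,ds$ with that combined nonnegative continuous function playing the role of "$g^{1/q}$". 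The exponents $\upsilon+\lambda-1/q$ coming out of Lemma \ref{lem:rec2} are $\alpha+\gamma-1/q=0$ and $\alpha-\beta+\gamma-1/q=-\beta\le0$; evaluated at $\tau\ge\tau_0$, $\tau^{-\beta}\le\tau_0^{-\beta}$, again producing the $\tau_0^\beta$ factor.

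Putting the two estimates together and taking the max, the $\tau$-powers disappear (or are bounded by $\tau_0^{-\beta}$), leaving
\[
\max\big\{|x(\tau)|,|{}^C\mathfrak{D}_0^\beta x(\tau)|\big\}\le|b|+K_1\Big(\int_0^\tau h^q(s)\varphi_1^q(|x(s)|)\varphi_2^q(|{}^C\mathfrak{D}_0^\beta x(s)|)\,ds\Big)^{1/q},
\]
with $K_1$ the maximum of the two constants $K_{1+p(\alpha-1),p\gamma}^{1/p}/\Gamma(\alpha)$ and $K_{1+p(\alpha-\beta-1),p\gamma}^{1/p}/(\Gamma(\alpha-\beta)\tau_0^\beta)$, exactly as stated; here I use $\max\{A+B_1,A+B_2\}=A+\max\{B_1,B_2\}$ and $\max\{cB_1,cB_2\}=c\max\{B_1,B_2\}$ to factor the single integral out. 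The main obstacle I anticipate is bookkeeping the constant: one must verify that the Beta-integral constant $C=K_{p(\upsilon-1)+1,p\lambda}$ from Lemma \ref{lem:rec2}, raised to the $1/p$ after the H\"older split, matches $K_{1+p(\alpha-1),p\gamma}^{1/p}$ (resp. with $\alpha-\beta$), and that the conjugate-exponent conventions ($1/p+1/q=1$ here versus $1/p+1/r=1$ in Lemma \ref{lem:rec2} with $r=q$) are consistent. A secondary subtlety is justifying that $\varphi_1(|x(\cdot)|)$, $\varphi_2(|{}^C\mathfrak{D}_0^\beta x(\cdot)|)$ and $h(\cdot)$ are continuous so that Lemma \ref{lem:rec2} applies on $[0,\tau]$ — this follows from $x\in AC[0,\infty)$, continuity of ${}^C\mathfrak{D}_0^\beta x$ (membership in the working space, or directly from $\mathfrak{I}_0^{1-\beta}x'$ being continuous), and the continuity hypotheses on $h,\varphi_1,\varphi_2$ in (C6). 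Everything else is the routine substitution already rehearsed in Lemma \ref{lem:4.2.3}.
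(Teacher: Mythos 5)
Your proposal is correct and follows essentially the same route as the paper: the same two integral representations (via Lemma \ref{lem:2.4.1} for $x$ and Lemma \ref{lem:4.2.1} for ${}^C\mathfrak{D}_0^{\beta }x$), the same application of Lemma \ref{lem:rec2} with $g=h\,\varphi _1(| x|) \varphi _2(|{}^C\mathfrak{D}_0^{\beta }x|) $ kept inside the integral, and the same exponent bookkeeping ($\alpha +\gamma -1/q=0$ and $\alpha -\beta +\gamma -1/q=-\beta $, with $\tau^{-\beta }\leq \tau _0^{-\beta }$ for $\tau \geq \tau _0$). The only cosmetic difference is your detour about a boundary term in the representation of ${}^C\mathfrak{D}_0^{\beta }x$, which in fact does not arise: Lemma \ref{lem:4.2.1} gives ${}^C\mathfrak{D}_0^{\beta }x=\mathfrak{I}_0^{\alpha -\beta }\,{}^C\mathfrak{D}_0^{\alpha }x$ directly, and one simply substitutes the right-hand side of \eqref{sc1}.
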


\begin{proof}
Applying $\mathfrak{I}_0^{\alpha }$ to \eqref{sc1} and taking into account
Lemma \ref{lem:2.4.1}, we have
\begin{equation}
x(\tau) =b+\frac{1}{\Gamma (\alpha) }
\int_0^{\tau }(\tau -s)^{\alpha -1}f(s,x(s)
,{}^C \mathfrak{D}_0^{\beta }x(s)) ds,\quad \tau
>0.  \label{sc4}
\end{equation}
Using the inequality \eqref{sc2}, we obtain
\begin{equation}
| x(\tau) | \leq | b| +
\frac{1}{\Gamma (\alpha) }\int_0^{\tau }(\tau -s)
^{\alpha -1}s^{\gamma }h(s) \varphi _1(|
x(s) |) \varphi _2(|^C
\mathfrak{D}_0^{\beta }x(s) |) ds,  \label{sc5}
\end{equation}
for $\tau >0$.
It follows from the assumptions $\beta <\alpha$, $q>\frac{1}{\alpha -\beta }$
and $\gamma =\frac{1}{q}-\alpha $ that
$p(\alpha -1) +1\geq p(\alpha -\beta -1) +1>0$ and
$p\gamma +1=p(\frac{1}{q}-\alpha) +1=p(1-\alpha) >0$.
Then, we apply Lemma \ref{lem:rec2}, to obtain
\begin{equation}
\begin{aligned}
| x(\tau) |
&\leq | b| + \frac{1}{\Gamma (\alpha) }K_{p(\alpha -1)
+1,p\gamma }^{1/p}\tau^{\alpha +\gamma -1/q}
\Big(\int_0^{\tau }h^{q}(s) \varphi _1^{q}(| x(s)
|) \varphi _2^{q}(|^C \mathfrak{D}
_0^{\beta }x(s) |) ds\Big)^{1/q} \\
&\leq | b| +K_1\Big(\int_0^{\tau }h^{q}(s) \varphi _1^{q}(| x(s) |
) \varphi _2^{q}(|^C \mathfrak{D}_0^{\beta}x(s) |) ds\Big)^{1/q},\quad \tau
>0.
\end{aligned} \label{4.39}
\end{equation}
Also, by Lemma \ref{lem:4.2.1}, we conclude that
\begin{equation}
\begin{aligned}
^C \mathfrak{D}_0^{\beta }x(\tau)
& =\mathfrak{I}_0^{\alpha -\beta }~^C \mathfrak{D}^{\alpha }x(\tau) =
\frac{1}{\Gamma (\alpha -\beta) }\int_0^{\tau }(\tau
-s)^{\alpha -\beta -1} ~^C \mathfrak{D}_0^{\alpha }x(s) ds \\
&=\frac{1}{\Gamma (\alpha -\beta) }\int_0^{\tau }(\tau
-s)^{\alpha -\beta -1}f(s,x(s) ,{}^C \mathfrak{D}
_0^{\beta }x(s)) ds,\quad \tau >0.
\end{aligned}  \label{sc9}
\end{equation}
In view of \eqref{sc2}, we have
\[
|^C \mathfrak{D}_0^{\beta }x(\tau) |
\leq \frac{1}{\Gamma (\alpha -\beta) }\int_0^{\tau }(
\tau -s)^{\alpha -\beta -1}s^{\gamma }h(s) \varphi
_1(| x(s) |) \varphi
_2(|^C \mathfrak{D}_0^{\beta }x(s)
|) ds,
\]
for $\tau >0$.
Again, from Lemma \ref{lem:rec2}, we find
\begin{equation}
\begin{aligned}
&|^C \mathfrak{D}_0^{\beta }x(\tau) | \\
&\leq \frac{K_{p(\alpha -\beta -1) +1,p\gamma }^{1/p}}{\Gamma
(\alpha -\beta) }\tau^{\alpha -\beta +\gamma -1/q}
\Big(\int_0^{\tau }h^{q}(s) \varphi _1^{q}(|x(s) |) \varphi _2^{q}(|^C
\mathfrak{D}_0^{\beta }x(s) |) ds\Big)^{1/q} \\
&\leq \frac{K_{p(\alpha -\beta-1) +1,p\gamma }^{1/p}}{\Gamma (\alpha -\beta) }\tau
^{-\beta }\big(\int_0^{\tau }h^{q}(s) \varphi _1^{q}(
| x(s) |) \varphi _2^{q}(|^C \mathfrak{D}_0^{\beta }x(s) |) ds\Big)^{1/q} \\
&\leq K_1(\int_0^{\tau}h^{q}(s) \varphi _1^{q}(| x(s)
|) \varphi _2^{q}(|^C \mathfrak{D}_0^{\beta }x(s) |) ds)^{1/q},
\quad \tau \geq \tau _0>0.
\end{aligned}  \label{4.42}
\end{equation}
Therefore \eqref{4.35} follows from  \eqref{4.36}, \eqref{4.39} and \eqref{4.42}.
\end{proof}


\begin{theorem}\label{thm:sc}
 Assume that $f$ satisfies {\rm (C5), (C6)}. Then, any
solution $x$ of \eqref{sc1} satisfies
\[
| x(\tau) | \leq C, \quad
|^C \mathfrak{D}_0^{\beta }x(\tau) | <C,
\]
for some positive constant $C,\tau >0$,
provided that
\[
\int_{\mathfrak{\xi }_0}^{\infty }\frac{ds}{\varphi _1^{q}(
s^{1/q}) \varphi _2^{q}(s^{1/q}) }=\infty ,\quad
\mathfrak{\xi }_0>0.
\]
\end{theorem}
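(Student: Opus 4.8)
The plan is to obtain the theorem as a corollary of Lemma~\ref{lem:esc} and Lemma~\ref{lem:res1}. Lemma~\ref{lem:esc} has already collapsed the two quantities we must bound into the single function $z$, whose defining integral in \eqref{4.36} is $\int_0^{\tau}h^{q}\varphi_1^{q}(|x|)\varphi_2^{q}(|{}^{C}\mathfrak{D}_0^{\beta}x|)\,ds$. Feeding the bound $\max\{|x|,|{}^{C}\mathfrak{D}_0^{\beta}x|\}\le z$ back into that integral turns \eqref{4.36} into an inequality of the form \eqref{sr14}, to which Lemma~\ref{lem:res1} applies directly; the integrability $h\in L_{q}(0,\infty)$ keeps the argument of $E^{-1}$ bounded uniformly in $\tau$, and the divergence hypothesis forces $E^{-1}$ to be globally defined. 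Together these give a constant bound for $z$, hence for $|x|$ and $|{}^{C}\mathfrak{D}_0^{\beta}x|$.

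To carry this out, I would first record that a solution of \eqref{sc1} lies in $C^{\alpha}[0,\infty)$, so ${}^{C}\mathfrak{D}_0^{\alpha}x\in C[0,\infty)$ and, by Lemma~\ref{lem:4.2.1}, ${}^{C}\mathfrak{D}_0^{\beta}x=\mathfrak{I}_0^{\alpha-\beta}\,{}^{C}\mathfrak{D}_0^{\alpha}x\in C[0,\infty)$ as well (the case $\beta=0$ is trivial, so assume $\beta>0$). With $\tau_0$ as in Lemma~\ref{lem:esc}, continuity on the compact interval $[0,\tau_0]$ provides a constant $M$ with $|x|,|{}^{C}\mathfrak{D}_0^{\beta}x|\le M$ there, which settles $\tau\le\tau_0$. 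For $s\ge\tau_0$ Lemma~\ref{lem:esc} gives $|x(s)|,|{}^{C}\mathfrak{D}_0^{\beta}x(s)|\le z(s)$, so by monotonicity of $\varphi_1,\varphi_2$ the integrand in \eqref{4.36} is $\le h^{q}(s)\varphi_1^{q}(z(s))\varphi_2^{q}(z(s))$ on $[\tau_0,\infty)$ and $\le h^{q}(s)\varphi_1^{q}(M)\varphi_2^{q}(M)$ on $[0,\tau_0]$. Splitting the integral, estimating the $[0,\tau_0]$ part by $D_0:=\varphi_1^{q}(M)\varphi_2^{q}(M)\int_0^{\infty}h^{q}(s)\,ds$, enlarging the remaining integral back to $[0,\tau]$, and peeling off the constant via subadditivity of $t\mapsto t^{1/q}$ ($q>1$) gives
\[
z(\tau)\le \widetilde{K}_1+K_1\Big(\int_0^{\tau}h^{q}(s)\,\varphi_1^{q}(z(s))\,\varphi_2^{q}(z(s))\,ds\Big)^{1/q},\qquad \widetilde{K}_1:=|b|+K_1 D_0^{1/q},
\]
for all $\tau>0$ (a one-line case split takes care of $\tau\le\tau_0$). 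This is exactly \eqref{sr14}.

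Now I would invoke Lemma~\ref{lem:res1}, with its $K_1,K_2$ set to $\widetilde{K}_1$ and to the constant $K_1$ of Lemma~\ref{lem:esc} respectively, to get
\[
z(\tau)\le\Big[E^{-1}\Big(E(2^{q-1}\widetilde{K}_1)+2^{q-1}K_1\int_0^{\infty}h^{q}(s)\,ds\Big)\Big]^{1/q}=:C_1,\qquad \tau>0,
\]
where $h\in L_{q}(0,\infty)$ and the monotonicity of $E^{-1}$ were used, and the hypothesis $\int_{\xi_0}^{\infty}ds/(\varphi_1^{q}(s^{1/q})\varphi_2^{q}(s^{1/q}))=\infty$ --- which, the integrand being positive and continuous, is independent of $\xi_0$, so I may take $\xi_0<2^{q-1}\widetilde{K}_1$ --- makes $E$ a continuous increasing bijection of $(\xi_0,\infty)$ onto $(0,\infty)$, hence $E^{-1}$ is finite at the displayed argument. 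Then $|x(\tau)|\le z(\tau)\le C_1$ and $|{}^{C}\mathfrak{D}_0^{\beta}x(\tau)|\le z(\tau)\le C_1$ for $\tau\ge\tau_0$, while both are $\le M$ on $[0,\tau_0]$; any $C>\max\{M,C_1\}$ then works.

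The step I expect to be the main obstacle is bookkeeping rather than conceptual: Lemma~\ref{lem:esc} controls ${}^{C}\mathfrak{D}_0^{\beta}x$ only for $\tau\ge\tau_0$, because of the factor $\tau^{-\beta}$, whereas the integral in \eqref{4.36} runs from $0$; so I must first establish continuity of ${}^{C}\mathfrak{D}_0^{\beta}x$ up to the origin and then absorb the contribution of $[0,\tau_0]$ into the constant through subadditivity, keeping the inequality in the precise form \eqref{sr14} demanded by Lemma~\ref{lem:res1}. One should also dispose of the degenerate case $\widetilde{K}_1=0$ (namely $b=0$ and $h\equiv0$ near $0$), where $x$ is constant near the origin and the conclusion is immediate, or simply enlarge $\widetilde{K}_1$. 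Apart from this, the proof introduces no new estimate.
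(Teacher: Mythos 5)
Your proposal follows essentially the same route as the paper: feed the bound of Lemma \ref{lem:esc} back into \eqref{4.36} to obtain an inequality of the form \eqref{sr14}, then apply Lemma \ref{lem:res1} and use $h\in L_{q}(0,\infty)$ together with the divergence hypothesis to get a uniform bound on $z$. You are in fact more careful than the paper, which applies \eqref{sc13} for all $\tau>0$ even though Lemma \ref{lem:esc} only controls $|{}^{C}\mathfrak{D}_0^{\beta}x|$ by $z$ for $\tau\geq\tau_0$; your splitting of the integral at $\tau_0$ and absorption of the $[0,\tau_0]$ contribution into the constant closes that small gap.
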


\begin{proof}
In view of Lemma \ref{lem:esc} we have
\begin{equation}
\varphi _1(| x(\tau) |) \leq
\varphi _1(z(\tau)), \quad
_2(|^C \mathfrak{D}_0^{\beta }x(\tau)
|) \leq \varphi _2(z(\tau)) ,
\quad \tau >0.  \label{sc13}
\end{equation}
From this inequality and \eqref{4.36}, we obtain
\begin{equation}
z(\tau) \leq | b| +K_1\Big(
\int_0^{\tau }h^{q}(s) \varphi _1^{q}(z(s)
) \varphi _2^{q}(z(s)) ds\Big)^{1/q},\quad \tau >0.  \label{sc14}
\end{equation}
Therefore, Lemma \ref{lem:res1} implies
\[
z(\tau) \leq \Big[ E^{-1}\Big(E(2^{q-1}K_1)+2^{q-1}K_2\int_0^{\tau }h^{q}(s) ds\Big) \Big]
^{1/q}<\infty ,
\]
because $h\in L_{q}(0,\infty) $. This completes the proof.
\end{proof}

\begin{example} \label{exsc3} \rm
 Consider the problem
\begin{equation}
\begin{gathered}
^C \mathfrak{D}_0^{2/3}x(\tau)
 =\tau^{1/q-2/3}e^{-\lambda \tau }(x(\tau))^{3/5}
 \big(^C \mathfrak{D}_0^{1/3}x(\tau)\big)^{1/3}
 \big(\cos (^C\mathfrak{D}_0^{1/3}x)\big) \quad \tau >0, \\
x(0) =b,\quad q>3,\quad \lambda >0.
\end{gathered}  \label{exsc}
\end{equation}
\end{example}

Let $\varphi _1(\tau) =\tau^{3/5}$,
$\varphi _2(\tau) =\tau^{1/3}$ and $h(\tau) =e^{-\lambda \tau }$,
$\gamma =1/q-2/3$. Then $h\in L_{q}(0,\infty) $ and
\[
\int_{\mathfrak{\xi }_0}^{\infty }\frac{ds}{\varphi _1^{q}(s^{
\frac{1}{q}}) \varphi _2^{q}(s^{1/q}) }
=\int_{\mathfrak{\xi }_0}^{\infty }\frac{ds}{s^{3/5}s^{1/3}}=\int_{\mathfrak{\xi }
_0}^{\infty }\frac{ds}{s^{14/15}}=\infty .
\]
Then, by Theorem \ref{thm:sc}, we deduce that any solution $x$ of \eqref{exsc} satisfies
\[
| x(\tau) | \leq C, \quad |^C \mathfrak{D}_0^{\beta }x(\tau) | <C,
\]
for $\alpha =2/3$, $\beta =1/3$, and $\tau >0$.

\subsection*{Acknowledgements}
M. D. Kassim wants to thank Imam
Abdulrahman Bin Faisal University for its  support and facilities.
N. E. Tatar is grateful for the financial support and the
facilities provided by King Fahd University of Petroleum and Minerals
through project number IN181008.

\end{document}